\newtheorem{theorem}{Theorem}[section]
\newtheorem{corollary}[theorem]{Corollary}
\newtheorem{lemma}[theorem]{Lemma}
\newtheorem{proposition}[theorem]{Proposition}
\numberwithin{equation}{section}
\newtheorem*{conjecture}{Conjecture}
\newcommand{\RR}{\mathbb{R}}
\newcommand{\CC}{\mathbb{C}}
\newcommand{\hyperg}[4]{\: _2\! F_1\! \left[\!\! \begin{array}{c} #1,\, #2 \\ #3 \end{array}\!\! ;\,  #4 \right]}
\newcommand{\RePt}{\mathrm{Re}\,}
\newcommand{\disk}{\mathbb{D}}
\begin{document}

%
%

\title[Norm of the Bergman projection]{Two-sided norm estimates for Bergman-type projections with an asymptotically sharp lower bound}
\thanks{\noindent The first author was supported by the National Natural Science
Foundation of China grant 11171318 and OATF,USTC; the second author was
supported by the Academy of Finland project number 79999201 and the Emil Aaltonen Foundation; the third author
was supported by the National Natural Science Foundation of China grant 11271124, 11301136,
and the Natural Science Foundation of Zhejiang province grant LQ13A010005.}


\author{Congwen Liu}


\address{School of Mathematical Sciences, University of Science and Technology of China, Hefei, Anhui 230026, People's Republic of China.}
\email{cwliu@ustc.edu.cn}

\author{Antti Per\"al\"a}

\address{Department of Physics and Mathematics, University of Eastern Finland, P.O. Box 111, 80101 Joensuu, Finland.}

\email{antti.perala@uef.fi}

\author{Lifang Zhou}

\address{Department of Mathematics,
Huzhou University,
Huzhou, Zhejiang 313000,
People¡¯s Republic of China}
\email{lfzhou@zjhu.edu.cn}

\begin{abstract}
We obtain new two-sided norm estimates for the family of Bergman-type projections arising from the standard weights $(1-|z|^2)^{\alpha}$ where $\alpha>-1$. As $\alpha\to -1$, the lower bound is sharp in the sense that it asymptotically agrees with the norm of the Riesz projection. The upper bound is estimated in terms of the maximal Bergman projection, whose exact operator norm we calculate. The results provide evidence towards a conjecture that was posed very recently by the first author.
\end{abstract}

\subjclass[2010]{Primary 32A25; Secondary 32A36, 47G10}


\keywords{Bergman projection, Weighted Bergman space, Operator norm}

\maketitle

%
%

\section{Introduction}

\noindent Let $\disk$ denote the unit disk in the complex plane and write $dA$ for the normalized Lebesgue area measure $dA(z)=\pi^{-1}dxdy$ (where $z=x+iy$). For $\alpha>-1$, the standard weighted area measure $dA_\alpha$ is given by
$$dA_\alpha(z)=(1+\alpha)(1-|z|^2)^{\alpha} dA(z).$$ Because of the normalization, $dA_\alpha$ is also a probability measure. As usual, for $p>0$, the space $L_{\alpha}^p(\disk)$ consists of all Lebesgue measurable
functions $f$ on $\disk$ for which
\begin{equation*}
\|f\|_{p,\alpha}:=\left\{\int_{\disk} |f(z)|^p dA_{\alpha}(z)\right\}^{\frac 1{p}}
\end{equation*}
is finite.  For $0<p<\infty$, the Bergman space $A_{\alpha}^p$ consists of analytic
functions $f$ in $L^p_\alpha(\disk)$, while $H^\infty(\disk)$ denotes the space of bounded analytic functions. Under the weighted $L^p$-topologies, the spaces above are complete, and Banach whenever they are normed.

We consider the natural projection onto these spaces, i.e., the orthogonal projection from $L_{\alpha}^{2}(\disk)$ onto
$A_{\alpha}^{2}$, also known as the Bergman projection. It can be expressed as an integral operator:
\[
P_{\alpha}f(z)=\int_{\disk} \frac {f(w)}{(1-z\bar{w})^{2+\alpha}} dA_{\alpha}(w).
\]
As usual, $dA_0=dA$, $L^p_0(\disk)=L^p(\disk)$, $A^p_0(\disk)=A^p(\disk)$, $\|\cdot\|_{p,0}=\|\cdot\|_p$ and $P_0=P$.

The Bergman projection is a central object in the study of analytic function spaces. It naturally relates to fundamental questions such as duality and harmonic conjugates, and it is also a building block for Toeplitz operators. Understanding its behaviour and estimating its size is therefore of vital importance on several occasions. There are several textbooks on Bergman spaces and Bergman projections. For the interested reader, we recommend \cite{DS, HKZ,  Zhu1, Zhu3}.

Let
\[
\|P_{\alpha}\|_{p,\alpha}:=\sup \left\{ \frac {\|P_{\alpha} f\|_{p,\alpha}} {\|f\|_{p,\alpha}}:\; f\in L_{\alpha}^p, f\neq 0\right\}
\]
be the operator norm of $P_{\alpha}$. For $p=2$, $P_\alpha$ is an orthogonal projection on $L^2_\alpha$, and hence $\|P_{\alpha}\|_{2,\alpha}=1$.
The purpose of the present work is to obtain new and improved two-sided estimates for $\|P_\alpha\|_{p,\alpha}$ for the full range
of $p \in (1,\infty)$, $\alpha>-1$. Our main result reads as follows.
\begin{theorem}\label{thm:main1}
For $1< p<\infty$, we have
\begin{equation}\label{eqn:main}
\frac{\Gamma(\frac {2+\alpha}{p})\Gamma(\frac {2+\alpha}{q})}{\Gamma^2 (\frac {2+\alpha}{2})}
\leq~ \|P_{\alpha}\|_{p,\alpha} ~\leq~ (1+\alpha) \frac{\Gamma(\frac {1+\alpha}{p})\Gamma(\frac {1+\alpha}{q})}{\Gamma^2 (\frac {2+\alpha}{2})},
\end{equation}
where $q:=\frac {p}{p-1}$ is the conjugate exponent of $p$.
\end{theorem}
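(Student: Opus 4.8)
The plan is to obtain the two bounds by fairly standard but carefully optimized Schur-type tests, adapted to exploit the special structure of the kernel $(1-z\bar w)^{-(2+\alpha)}$.

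For the **upper bound**, I would first pass to the maximal Bergman projection $P_\alpha^+$, defined by inserting absolute values in the integral kernel,
\[
P_\alpha^+ f(z)=\int_\disk \frac{|f(w)|}{|1-z\bar w|^{2+\alpha}}\,dA_\alpha(w),
\]
so that $\|P_\alpha\|_{p,\alpha}\le\|P_\alpha^+\|_{p,\alpha}$. I would then apply Schur's test with a test function of the form $h(z)=(1-|z|^2)^{-s}$ for a parameter $s$ to be optimized. The key input is the classical Forelli–Rudin type asymptotic/exact integral
\[
\int_\disk \frac{(1-|w|^2)^{\alpha}}{|1-z\bar w|^{2+\alpha}}(1-|w|^2)^{-sq}\,dA(w),
\]
which, for the right choice of exponents, is a constant multiple of $(1-|z|^2)^{-sq}$; the relevant beta-integral identity (essentially $\int_\disk (1-|w|^2)^{b}|1-z\bar w|^{-(2+b)}\,dA(w)$ being constant when $b>-1$) is exactly what produces the Gamma factors. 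Choosing $sp$ and $sq$ so that the two Schur integrals balance — i.e. $s=\frac{1}{pq}(1+\alpha)\cdot(\text{something})$, to be pinned down by forcing both exponents to equal $\frac{1+\alpha}{p}-1$ and $\frac{1+\alpha}{q}-1$ — yields the constant $(1+\alpha)\Gamma(\tfrac{1+\alpha}{p})\Gamma(\tfrac{1+\alpha}{q})/\Gamma^2(\tfrac{2+\alpha}{2})$. I would compute $\|P_\alpha^+\|_{p,\alpha}$ exactly (the abstract states this is done), so that the upper bound follows, with equality for $P_\alpha^+$.

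For the **lower bound**, the standard device is to test $P_\alpha$ against functions concentrating the mass, or better, to use a duality/pairing argument. I would take a one-parameter family $f_t(w)=(1-|w|^2)^{-t}$ (or $(1-\bar a w)^{-c}$ type functions) with $t$ chosen just inside the range of $L^p_\alpha$-integrability, compute $\|f_t\|_{p,\alpha}$ and $\|P_\alpha f_t\|_{p,\alpha}$ via the same beta-integrals, and let $t$ approach the critical value. The ratio should tend to $\Gamma(\tfrac{2+\alpha}{p})\Gamma(\tfrac{2+\alpha}{q})/\Gamma^2(\tfrac{2+\alpha}{2})$ after a Stirling-type limit evaluation of the Gamma quotient. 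An equivalent route, likely cleaner, is to pair $P_\alpha$ with a test function $g$ so that $\langle P_\alpha f_t, g\rangle$ reduces to a single explicit integral and then optimize; since $P_\alpha$ is self-adjoint on $L^2_\alpha$ one can move it onto $g$ when $g$ is already analytic, turning the estimate into a sharp Hölder/Hardy inequality.

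The **main obstacle** I anticipate is the lower bound: unlike the upper bound, where Schur's test is essentially mechanical once the exponents are balanced, proving asymptotic sharpness requires identifying the \emph{extremal-ish} test family precisely and then executing a delicate limiting analysis of ratios of Gamma functions as the parameter hits the boundary of integrability. One must be careful that the chosen $f_t$ genuinely lies in $L^p_\alpha$, that $P_\alpha f_t$ is computed correctly (the kernel exponent $2+\alpha$ interacts with $t$), and that the limit is taken in a way that isolates exactly the claimed constant rather than a larger or smaller one. Verifying that this constant indeed converges, as $\alpha\to-1$, to the known norm of the Riesz projection $1/\sin(\pi/p)$ is a useful consistency check that should fall out of the Gamma-function identity $\Gamma(x)\Gamma(1-x)=\pi/\sin(\pi x)$ applied with $x=\frac{2+\alpha}{p}\to\frac1p$.
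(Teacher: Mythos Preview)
Your plan for the upper bound is correct and matches the paper: pass to the maximal projection $P_\alpha^\sharp$, apply Schur's test with $u(z)=(1-|z|^2)^{-(1+\alpha)/(pq)}$, and read off the constant from the exact Forelli--Rudin integral \eqref{eqn:supval}. The paper also proves this constant is the exact norm of $P_\alpha^\sharp$ by a dual-pair test with explicit functions $g_\epsilon,h_\epsilon$.

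Your plan for the lower bound, however, has a genuine gap. Neither of the test families you propose can detect the lower bound. The function $f_t(w)=(1-|w|^2)^{-t}$ is radial, so $P_\alpha f_t$ is a holomorphic radial function, i.e.\ a constant; the ratio $\|P_\alpha f_t\|_{p,\alpha}/\|f_t\|_{p,\alpha}$ therefore tends to $0$, not to the claimed bound. The function $f(w)=(1-\bar a w)^{-c}$ is holomorphic in $w$, so $P_\alpha f=f$ whenever $f\in L^p_\alpha$, and the ratio is identically $1$. The dual-pairing variant you sketch runs into the same obstruction: moving $P_\alpha$ onto an analytic $g$ makes it disappear.

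What the paper actually does is test against the \emph{mixed} function
\[
f_\xi(z)=(1-\xi\bar z)^{\beta-2\beta/p}(1-z\bar\xi)^{-\beta},\qquad \beta=\tfrac{2+\alpha}{2},
\]
which is neither holomorphic nor antiholomorphic, so $P_\alpha$ acts nontrivially. One then computes $P_\alpha f_\xi$ as an explicit hypergeometric series, splits it as $\Phi_\xi+\Psi_\xi+\Upsilon_\xi$ where $\Phi_\xi$ carries exactly the target constant times $\|f_\xi\|_{p,\alpha}$, and shows the error terms $\Psi_\xi,\Upsilon_\xi$ stay bounded as $|\xi|\to 1^-$ while $\|f_\xi\|_{p,\alpha}\to\infty$. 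That last step is the real work: it needs a Hausdorff--Young inequality for $A^p_\alpha$, the L'H\^opital monotone rule, and a case split at $\alpha=(2-p)/(1-p)$. None of this is captured by ``let $t$ approach the critical value and take a Stirling limit.''
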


The boundedness of $P_\alpha$ when $\alpha=0$ dates back fifty years and is due to Zaharjuta and Judovi\v c \cite{ZJ}. About ten years later, the result was established for the whole scale $\alpha>-1$ by Forelli and Rudin \cite{FR}. However, calculating the exact value of $\|P_{\alpha}\|_{p,\alpha}$ has turned out to be very challenging. Zhu \cite{Zhu2} first obtained the right asymptotics of $\|P_{\alpha}\|_{p,\alpha}$, and later Dostani\'c \cite{Dos2} found a quantitative version of Zhu's result for $\alpha=0$. Dostani\'c also conjectured that
\begin{equation}\label{eq:dos}
\|P\|_p=\csc(\pi/p).
\end{equation}

Note that the conjecture agrees with the already established value for the norm of the Riesz projection due to Hollenbeck and Verbitsky \cite{HV}. However, very recently, in \cite{Liu},  the first author of the present paper obtained
$$\|P\|_p\geq \Gamma(2/p)\Gamma(2/q),$$
which disproves \eqref{eq:dos}. However, this motivated the following conjecture.

\begin{conjecture}[Liu]\label{liu:conj} For $1<p<\infty$, we have
$$\|P\|_p= \Gamma(2/p)\Gamma(2/q).$$
\end{conjecture}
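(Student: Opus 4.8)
Since the lower bound $\|P\|_p\ge\Gamma(2/p)\Gamma(2/q)$ is exactly the case $\alpha=0$ of the left inequality in Theorem~\ref{thm:main1} (equivalently Liu's bound from \cite{Liu}), proving the identity $\|P\|_p=\Gamma(2/p)\Gamma(2/q)$ reduces to the matching sharp \emph{upper} bound $\|P\|_p\le\Gamma(2/p)\Gamma(2/q)$. The upper bound furnished by Theorem~\ref{thm:main1} is $\Gamma(1/p)\Gamma(1/q)$, which is strictly larger (at $p=2$ it equals $\pi$ while the true value is $1$); the reason is structural, since that estimate is in fact the norm of the maximal projection
$$P^+f(z)=\int_\disk\frac{f(w)}{|1-z\bar w|^{2}}\,dA(w)$$
obtained by replacing the oscillating kernel $(1-z\bar w)^{-2}$ by its modulus. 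Because $\Gamma(1/p)\Gamma(1/q)>\Gamma(2/p)\Gamma(2/q)$, any argument that dominates $|Pf|$ by $P^+|f|$ is doomed to overshoot, so the whole plan must be to retain the cancellation of the Bergman kernel.

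The route I would pursue is a Bellman-function / Hollenbeck--Verbitsky minorant. In the Riesz-projection problem \cite{HV} the sharp constant $\csc(\pi/p)$ comes from a pointwise inequality of the form $|P_+f|^p\le\csc^p(\pi/p)\,|f|^p+\Phi$, where $\Phi$ is subharmonic and integrates to something nonpositive against arc length, so that integrating over $\mathbb T$ yields the bound. For the Bergman projection I would search for a two-variable function $\Phi(u,v)$ and try to prove, along $u=Pf(z)$, $v=f(z)$, the pointwise estimate
$$|u|^p\le\big(\Gamma(2/p)\Gamma(2/q)\big)^p\,|v|^p+\Phi(u,v)$$
together with a structural condition forcing $\int_\disk\Phi\,dA\le 0$. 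The identity $\Gamma(2/p)\Gamma(2/q)=B(2/p,2/q)=\int_0^1 t^{2/p-1}(1-t)^{2/q-1}\,dt$ strongly suggests that the correct minorant, and the extremizers it certifies, are governed by a Beta-type profile; it is the matching of this Beta integral, rather than the product $\Gamma(1/p)\Gamma(1/q)$, that should produce the sharp constant.

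To pin down the constant and, more importantly, the shape of the extremizers, I would analyze the Bergman projection on the upper half-plane $\mathbb H$, where $P_{\mathbb H}$ commutes with horizontal translations and with dilations. These symmetries do not diagonalize $P_{\mathbb H}$ on $L^p$ for $p\ne 2$, since the Fourier transform in the real variable is only an $L^2$ isometry; but they do single out a dilation-adapted family of test functions, and evaluating $P_{\mathbb H}$ on such a family collapses the ratio $\|P_{\mathbb H}f\|_p/\|f\|_p$ to a one-dimensional Mellin-type integral whose value is a Gamma quotient. Running this computation, and matching it against the boundary-concentrating near-extremizers transplanted to $\disk$ by the Cayley transform, should reproduce $\Gamma(2/p)\Gamma(2/q)$ and expose the precise profile needed as input for the minorant; the nonconstant Jacobian of the Cayley transform in the unweighted area measure means this transfer only suggests, and does not by itself prove, the disk bound.

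The main obstacle is the construction of $\Phi$, and here the Bergman setting departs from the Hardy setting in two essential ways. On the circle the decomposition $L^2(\mathbb T)=H^2\oplus\overline{H^2_0}$ makes $f-P_+f$ anti-analytic, which is precisely what renders the relevant composition subharmonic; for the Bergman projection the orthogonal complement of $A^2$ in $L^2$ is \emph{not} the space of anti-analytic functions, so this clean splitting is unavailable. Furthermore, over $\disk$ with area measure there is no boundary-mean-value device to absorb $\int_\disk\Phi\,dA$ the way arc length does on $\mathbb T$, so subharmonicity must be converted into a usable inequality through a Green's-function or area identity. Finding a single $\Phi$ that respects the genuine structure of $f-Pf$ and still yields exactly the constant $\Gamma(2/p)\Gamma(2/q)$ is the crux of the matter, and is why the statement remains, for now, a conjecture.
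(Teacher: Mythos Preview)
The statement is a \emph{conjecture}, and the paper contains no proof of it; indeed, Section~5 explicitly says that establishing (or refuting) the sharp upper bound ``remains a problem for the future.'' Your write-up correctly identifies this: you note that the lower bound is the $\alpha=0$ case of Theorem~\ref{thm:main1}, that the upper bound coming from the maximal projection $P^\sharp$ overshoots because it discards the oscillation of the kernel, and you conclude that the statement is still open. So there is nothing to compare against, and your assessment of the situation is accurate.

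Your proposed route via a Hollenbeck--Verbitsky type minorant is in fact the very approach the paper singles out in Section~5, and the obstacles you list are the same ones the authors flag: for $P_\alpha$ the function $(1-P_\alpha)f$ is not anti-analytic, so the subharmonicity mechanism behind \eqref{HolVer} breaks down. Your additional heuristics (the half-plane reduction, the Beta-integral interpretation of $\Gamma(2/p)\Gamma(2/q)$, the need for a Green's-function substitute for the boundary mean-value step) go somewhat beyond the paper's remarks and are reasonable lines of attack, but none of them constitutes a proof, as you yourself acknowledge. In short: your proposal is not a proof, but neither does the paper claim one; your discussion is consistent with, and slightly expands on, the paper's own commentary.
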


One of the reasons for writing this paper is to provide evidence supporting the above conjecture. Namely, observe that the lower bound in \eqref{eqn:main} tends to $\csc(\pi/p)$ as $\alpha\to -1$. Since $\alpha=-1$ can often be viewed as the case of the Hardy spaces and the Riesz projection, we have convincing evidence that
$$\frac{\Gamma(\frac {2+\alpha}{p})\Gamma(\frac {2+\alpha}{q})}{\Gamma^2 (\frac {2+\alpha}{2})}
= \|P_{\alpha}\|_{p,\alpha},$$ which we conjecture to be true. When $\alpha=0$, we then recover the conjecture from \cite{Liu}. The lower bound is obtained by using a suitable choice of test functions formed from Bergman-type kernels along with some interpolation, manipulation of the classical Forelli-Rudin estimates \cite{FR}, and a Hausdorff-Young type inequality. We remark that in our argument there is a cut-off at $\alpha=(2-p)/(1-p)$, and we need two separate methods. It would be of some interest to find a unified approach that directly covers all cases.

For the upper bound, we consider the maximal Bergman projection, which is arguably a quite standard approach in this direction. However, we manage to calculate its exact operator norm. For the unweighted case, this result can be deduced from the work of Dostani\'c \cite{Dos} -- the weighted case is probably new.

In the recent years, there has been increasing interest in the study of the size of Bergman projections in various context other than $A^p_\alpha$. For the Bloch space, we mention the works of the second author \cite{Per1, Per2}, as well as the work of Kalaj-Markovi\'c \cite{KM}. For the Besov spaces we mention the papers of Kaptano\u glu-\"Ureyen, Per\"al\"a and Vujadinovi\'c \cite{KU,Per3,Vuj}.

\section{Preliminaries}

\noindent We use the classical notation for the functions $_2\! F_1$ 
\begin{equation*}\label{eq:hypergdefin}
\hyperg{a}{b}{c}{\lambda}=\sum_{k=0}^{\infty}\frac{(a)_k(b)_k}{(c)_k}\frac{\lambda^k}{k!}
\end{equation*}
with $c\neq 0, -1,-2,\ldots$, where
\[
(a)_0=1,\quad(a)_k=a(a+1)\ldots(a+k-1)
\quad \text{ for } k\geq1.
\]
denotes the Pochhammer symbol of $a$. This series gives an analytic function for $|\lambda|<1$, called the Gauss hypergeometric function associated to
$(a,b,c)$.

We refer to \cite[Chapter II]{Erd} for the properties of these
functions. Here, we only list some of them for later reference.
\begin{align}
&\!\!\!\hyperg{a}{b}{c}{1^-} = \frac {\Gamma(c) \Gamma(c-a-b)}
{\Gamma(c-a) \Gamma(c-b)},\qquad \RePt(c-a-b)>0.
\label{eqn:gauss}\\
&\hyperg{a}{b}{c}{\lambda} = (1-\lambda)^{c-a-b} \hyperg{c-a}{c-b}{c}{\lambda}. \label{eqn:euler}\\
&\hyperg{a}{b}{c}{\lambda}=\frac{\Gamma(c)}{\Gamma(b)\Gamma(c-b)}\int_0^1
t^{b-1}(1-t)^{c-b-1} (1-t\lambda)^{-a} dt,\label{eqn:euler2}\\
&\hspace{64pt} \RePt c>\RePt b>0;\;  |\arg(1-\lambda)|<\pi;\;
\lambda\neq 1.\notag\\
&\hyperg{a}{b}{c}{\lambda}=\frac{\Gamma(c)}{\Gamma(\gamma)\Gamma(c-\gamma)}\int_0^1
t^{\gamma-1}(1-t)^{c-\gamma-1}\hyperg{a}{b}{\gamma}{t\lambda}dt,\label{eq:integralhyperg}\\
&\hspace{64pt} \RePt c>\RePt \gamma>0;\;  |\arg(1-\lambda)|<\pi;\;
\lambda\neq 1.\notag \\
\frac {d^k}{d\lambda^k} &\hyperg{a}{b}{c}{\lambda} = \frac {(a)_k (b)_k}{(c)_k} \hyperg {a+k}{b+k}{c+k}{\lambda}, \quad k\in \mathbb{N}.
\label{eqn:diffhyperg}
\end{align}

\begin{lemma}\label{lem:aneasyformula}
Suppose $\RePt c>0$, $\RePt \delta>0$ and $\RePt (\delta+c-a-b)>0$. Then
\begin{equation}\label{eqn:aneasyformula}
\int_0^1 t^{c-1}(1-t)^{\delta-1}\hyperg{a}{b}{c}{t}dt = \frac {\Gamma(c)
\Gamma(\delta)\Gamma(\delta+c-a-b)} {\Gamma(\delta+c-a) \Gamma(\delta+c-b)}.
\end{equation}
\end{lemma}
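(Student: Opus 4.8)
The plan is to expand the hypergeometric function as a power series and integrate term by term. Writing $\hyperg{a}{b}{c}{t} = \sum_{k=0}^\infty \frac{(a)_k (b)_k}{(c)_k}\frac{t^k}{k!}$, one gets, after interchanging sum and integral,
\[
\int_0^1 t^{c-1}(1-t)^{\delta-1}\hyperg{a}{b}{c}{t}\,dt = \sum_{k=0}^\infty \frac{(a)_k (b)_k}{(c)_k \, k!}\int_0^1 t^{c+k-1}(1-t)^{\delta-1}\,dt.
\]
The inner integral is the Beta integral $B(c+k,\delta) = \frac{\Gamma(c+k)\Gamma(\delta)}{\Gamma(c+k+\delta)}$, which requires $\RePt c>0$ and $\RePt \delta>0$ — exactly two of the hypotheses. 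Using $\Gamma(c+k) = (c)_k\Gamma(c)$ and $\Gamma(c+k+\delta) = (c+\delta)_k\Gamma(c+\delta)$, the factor $(c)_k$ cancels and the right-hand side collapses to
\[
\frac{\Gamma(c)\Gamma(\delta)}{\Gamma(c+\delta)}\sum_{k=0}^\infty \frac{(a)_k (b)_k}{(c+\delta)_k}\frac{1}{k!} = \frac{\Gamma(c)\Gamma(\delta)}{\Gamma(c+\delta)}\,\hyperg{a}{b}{c+\delta}{1^-}.
\]
Finally, evaluate the remaining hypergeometric at $1^-$ by Gauss's formula \eqref{eqn:gauss} with parameter $c+\delta$ in place of $c$, which is legitimate precisely because $\RePt((c+\delta)-a-b) = \RePt(\delta+c-a-b) > 0$, the third hypothesis. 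This yields $\frac{\Gamma(c+\delta)\Gamma(\delta+c-a-b)}{\Gamma(\delta+c-a)\Gamma(\delta+c-b)}$; multiplying through, the $\Gamma(c+\delta)$ factors cancel and we arrive at \eqref{eqn:aneasyformula}.

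The only genuine technical point is justifying the term-by-term integration. If $\RePt a$ and $\RePt b$ are such that the series does not converge absolutely up to $t=1$, one should instead argue on $[0,1-\eps]$, where uniform convergence of the power series is immediate, and then pass to the limit $\eps\to 0^+$; the hypothesis $\RePt(\delta+c-a-b)>0$ guarantees that the integral over $[1-\eps,1]$ is negligible and that the limiting integral and the limiting series both converge. Alternatively, one may invoke the integral representation \eqref{eq:integralhyperg} with $\gamma = c$ and $\lambda$ real, $\lambda\to 1^-$, which packages the same analytic continuation. I expect this convergence bookkeeping near $t=1$ to be the main (and only) obstacle; everything else is the mechanical cancellation of Pochhammer symbols described above.
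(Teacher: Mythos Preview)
Your proof is correct, and in substance it coincides with the paper's argument. The paper simply applies \eqref{eq:integralhyperg} (with the outer parameter taken to be $c+\delta$ and $\gamma=c$) and lets $\lambda\to 1^-$, then evaluates the left side by \eqref{eqn:gauss}; your term-by-term integration is exactly what underlies \eqref{eq:integralhyperg}, and indeed you explicitly point to this route as your ``alternative''. The only difference is packaging: the paper hides the convergence bookkeeping near $t=1$ inside the statement that both sides of \eqref{eq:integralhyperg} are continuous at $\lambda=1$, whereas you spell out the $[0,1-\eps]$ truncation.
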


\begin{proof}
Note that, under the assumption of the lemma, both sides of \eqref{eq:integralhyperg} are continuous at $\lambda=1$.
The lemma then follows by letting $\lambda\to 1^{-}$ and applying \eqref{eqn:gauss}.
\end{proof}

\begin{lemma}\label{lem:crucial}
Let $a,b,c\in \mathbb{R}$ and $t>-1$. The identity
\begin{align}\label{eqn:crucial2}
&\int_{\disk} \frac {(1-|\xi|^2)^t dA(\xi)} {(1-z\bar{\xi})^{a} (1-w\bar{\xi})^{b} (1-\xi\bar{w})^{c}}\\
&\qquad =~ \frac {1} {1+t} \sum_{j=0}^{\infty} \frac {(a)_{j} (c)_{j}} {(2+t)_{j} j!} \hyperg {b}{c+j}{2+t+j}{|w|^2}
(z\bar{w})^j \notag
\end{align}
holds for any $z,w\in \disk$.
\end{lemma}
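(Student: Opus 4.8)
The plan is to reduce \eqref{eqn:crucial2} to a bookkeeping exercise with power series: expand each of the three factors in the integrand as a binomial series, integrate the resulting series term by term over $\disk$ in polar coordinates, and then regroup the remaining double series by powers of $z$.

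First I would fix $z,w\in\disk$ and write, using principal branches,
\begin{align*}
\frac{1}{(1-z\bar\xi)^{a}}&=\sum_{m\ge 0}\frac{(a)_m}{m!}(z\bar\xi)^m,\\
\frac{1}{(1-w\bar\xi)^{b}}&=\sum_{n\ge 0}\frac{(b)_n}{n!}(w\bar\xi)^n,\\
\frac{1}{(1-\xi\bar w)^{c}}&=\sum_{l\ge 0}\frac{(c)_l}{l!}(\xi\bar w)^l,
\end{align*}
which are valid for every $\xi\in\disk$ because $|z\bar\xi|,|w\bar\xi|,|\xi\bar w|<1$. To legitimize multiplying these out and integrating term by term, I would choose $\rho$ with $\max\{|z|,|w|\}<\rho<1$; then for all $\xi\in\disk$ the product is bounded in modulus, uniformly in $\xi$, by the finite constant $\bigl(\sum_m \tfrac{|(a)_m|}{m!}\rho^m\bigr)\bigl(\sum_n \tfrac{|(b)_n|}{n!}\rho^n\bigr)\bigl(\sum_l \tfrac{|(c)_l|}{l!}\rho^l\bigr)$, while $(1-|\xi|^2)^t\in L^1(\disk)$ since $t>-1$, so Tonelli's theorem applies.

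Next, writing $\xi=re^{i\theta}$ with $dA(\xi)=\pi^{-1}r\,dr\,d\theta$, the monomial $\bar\xi^{\,m+n}\xi^{\,l}=r^{m+n+l}e^{i(l-m-n)\theta}$ integrates to zero in $\theta$ unless $l=m+n$, in which case the angular integral contributes $2\pi$ and the radial integral is the Beta-type integral
\[
2\int_0^1 r^{2(m+n)+1}(1-r^2)^t\,dr=\int_0^1 s^{m+n}(1-s)^t\,ds=\frac{(m+n)!}{(1+t)\,(2+t)_{m+n}},
\]
where I have used $\Gamma(1+t)/\Gamma(k+2+t)=\bigl[(1+t)(2+t)_k\bigr]^{-1}$. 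Collecting the surviving terms and then reindexing by $j=m$, with $z^j w^n\bar w^{\,j+n}=(z\bar w)^j|w|^{2n}$, the integral becomes
\begin{align*}
&\frac{1}{1+t}\sum_{m,n\ge 0}\frac{(a)_m(b)_n(c)_{m+n}}{m!\,n!\,(2+t)_{m+n}}\,z^m w^n\bar w^{\,m+n}\\
&\qquad=\frac{1}{1+t}\sum_{j\ge 0}(z\bar w)^j\sum_{n\ge 0}\frac{(a)_j(b)_n(c)_{j+n}}{j!\,n!\,(2+t)_{j+n}}\,|w|^{2n};
\end{align*}
the double series converges absolutely because $(c)_{m+n}/(2+t)_{m+n}$ grows at most polynomially in $m+n$, which is overwhelmed by the geometric decay of $|z|^m$, so the rearrangement is valid. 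Finally, the Pochhammer splittings $(c)_{j+n}=(c)_j(c+j)_n$ and $(2+t)_{j+n}=(2+t)_j(2+t+j)_n$ turn the inner sum over $n$ into
\[
\frac{(a)_j(c)_j}{j!\,(2+t)_j}\,\hyperg{b}{c+j}{2+t+j}{|w|^2},
\]
which is well defined since $2+t+j>0$, and summing over $j$ yields \eqref{eqn:crucial2}.

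There is no serious obstacle here: once the term-by-term integration and the rearrangement of the double series are justified — which, as sketched above, needs only that the relevant power series have radius of convergence at least $1$ and that $(c)_m/(2+t)_m$ grows at most polynomially — the identity falls out of the two Pochhammer splittings. If one of $a,b,c$ happens to be a non-positive integer, the corresponding series simply terminates and every step remains valid, so no separate treatment is needed.
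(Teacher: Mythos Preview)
Your proof is correct and follows essentially the same route as the paper: expand the three kernel factors as binomial series, integrate term by term in polar coordinates so that the angular integral forces $l=m+n$, evaluate the radial integral, and regroup using the Pochhammer splittings $(c)_{j+n}=(c)_j(c+j)_n$ and $(2+t)_{j+n}=(2+t)_j(2+t+j)_n$. The only cosmetic difference is that the paper first packages the angular integral as $\hyperg{b}{c+j}{1+j}{r^2|w|^2}$ and then performs the radial integration via the integral representation \eqref{eq:integralhyperg}, whereas you compute the radial Beta integral directly and recognize the hypergeometric series afterward; both paths arrive at the same double series.
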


\begin{proof}
Recall that
\begin{equation}\label{eqn:binom}
(1-\lambda)^{-\gamma} = \sum_{k=0}^{\infty} \frac {(\gamma)_k}{k!} \lambda^k
\end{equation}
holds for $\lambda\in \mathbb{C}$, $|\lambda|<1$ and $\gamma\in \mathbb{R}$.
This leads to
\begin{align*}
&\frac {1}{2\pi} \int_0^{2\pi} (1-z e^{-i\theta})^{-a} (1-w e^{-i\theta})^{-b} (1-e^{i\theta} \bar{w})^{-c} d\theta\\
& \qquad =~ \sum_{j=0}^{\infty} \sum_{k=0}^{\infty} \sum_{\ell=0}^{\infty} \frac {(a)_{j} (b)_{k} (c)_{\ell}} {j!k!\ell !}
\frac {1}{2\pi} \int_0^{2\pi}(z e^{-i\theta})^j (w e^{-i\theta})^k (\bar{w}e^{i\theta})^{\ell} d\theta\\
& \qquad =~ \sum_{j=0}^{\infty} \sum_{k=0}^{\infty} \frac {(a)_{j} (b)_{k} (c)_{j+k}} {j!k! (j+k)!}
\, |w|^{2k} (z\bar{w})^j.
\end{align*}
Note that $(c)_{j+k}=(c+j)_k (c)_j$ and $(j+k)!=(1+j)_k (1)_j$. Then
\begin{align*}
&\frac {1}{2\pi} \int_0^{2\pi} (1-z e^{-i\theta})^{-a} (1-w e^{-i\theta})^{-b} (1-e^{i\theta} \bar{w})^{-c} d\theta\\
& \qquad =~ \sum_{j=0}^{\infty} \frac {(a)_{j} (c)_{j}} {(1)_j j!}
\left\{ \sum_{k=0}^{\infty} \frac {(b)_{k} (c+j)_{k}} {(1+j)_k k!}  |w|^{2k} \right\} (z \bar{w})^j\\
& \qquad =~ \sum_{j=0}^{\infty} \frac {(a)_{j} (c)_{j}} {(1)_j j!} \hyperg {b}{c+j}{1+j}{|w|^2} (z \bar{w})^j.
\end{align*}
Therefore
\begin{align*}
&\int_{\disk} \frac {(1-|\xi|^2)^t dA(\xi)} {(1-z\bar{\xi})^{a} (1-w\bar{\xi})^{b} (1-\xi\bar{w})^{c}}\\
&\qquad =~ 2 \int_0^1 \left\{ \frac {1}{2\pi} \int_0^{2\pi} (1-z re^{-i\theta})^{-a} (1-w re^{-i\theta})^{-b}
(1-re^{i\theta} \bar{w})^{-c} d\theta  \right\} (1-r^2)^t r dr \\
&\qquad =~ 2 \sum_{j=0}^{\infty} \frac {(a)_{j} (c)_{j}} {(1)_{j} j!} \left\{ \int_0^1 r^{1+2j} (1-r^2)^t
\hyperg {b}{c+j}{1+j}{r^2 |w|^2} dr \right\} (z\bar{w})^j.
\end{align*}
By \eqref{eq:integralhyperg}, the integral in the parentheses equals
\[
\frac 12 \frac {\Gamma(1+j) \Gamma(1+t)} {\Gamma(2+t+j)} \hyperg {b} {c+j} {2+t+j} {|w|^2}.
\]
Inserting this into the above series yields \eqref{eqn:crucial2}.
\end{proof}

As an immediate consequence, we have the following.

\begin{corollary}\label{cor:rudin1}
For $a\in \RR$ and $t>-1$, we have
\begin{align}\label{eqn:keyfml2a}
\int_{\disk} \frac{(1-|\xi|^2)^{t}}{|1-z\bar{\xi}|^{2a}} dA(\xi)
=\frac{1}{1+t} \hyperg{a}{a}{2+t}{|z|^2}
\end{align}
holds for all $z\in \disk$.
\end{corollary}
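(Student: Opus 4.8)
The plan is to read off the identity \eqref{eqn:keyfml2a} as a special case of Lemma~\ref{lem:crucial}, so the only real work is to choose the parameters correctly and then simplify. The point is the factorization
\[
|1-z\bar{\xi}|^{2a} = (1-z\bar{\xi})^{a}\,(1-\xi\bar{z})^{a},
\]
which holds because $1-\xi\bar z = \overline{1-z\bar\xi}$. Thus, taking in \eqref{eqn:crucial2} the parameters $w=z$, $b=0$ and $c=a$, the three kernel factors $(1-z\bar\xi)^{a}(1-w\bar\xi)^{b}(1-\xi\bar w)^{c}$ collapse to $(1-z\bar\xi)^{a}\cdot 1\cdot(1-\xi\bar z)^{a}=|1-z\bar\xi|^{2a}$. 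Since $t>-1$, Lemma~\ref{lem:crucial} applies with these values and gives
\[
\int_{\disk} \frac{(1-|\xi|^2)^{t}}{|1-z\bar{\xi}|^{2a}}\, dA(\xi)
= \frac{1}{1+t}\sum_{j=0}^{\infty}\frac{(a)_j (a)_j}{(2+t)_j\, j!}\,\hyperg{0}{a+j}{2+t+j}{|z|^2}\,(z\bar{z})^{j}.
\]

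It then remains to simplify the right-hand side. Because $(0)_0=1$ while $(0)_k=0$ for every $k\ge 1$, the defining series of the Gauss function degenerates to its constant term, i.e. $\hyperg{0}{a+j}{2+t+j}{|z|^2}=1$ for all $j$; combining this with $(z\bar z)^j=|z|^{2j}$ yields
\[
\int_{\disk} \frac{(1-|\xi|^2)^{t}}{|1-z\bar{\xi}|^{2a}}\, dA(\xi)
= \frac{1}{1+t}\sum_{j=0}^{\infty}\frac{(a)_j (a)_j}{(2+t)_j\, j!}\,|z|^{2j}
= \frac{1}{1+t}\,\hyperg{a}{a}{2+t}{|z|^2},
\]
which is exactly \eqref{eqn:keyfml2a}.

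There is no genuine difficulty here; the only points requiring a little care are (i) matching the parameters so that the product of the three kernel factors reduces to $|1-z\bar\xi|^{2a}$ rather than, say, $(1-z\bar\xi)^{2a}$, and (ii) the observation that a Gauss hypergeometric function with vanishing upper parameter is identically $1$. If one prefers not to invoke Lemma~\ref{lem:crucial}, the identity can be established directly by the same scheme used in its proof: expand $(1-z\bar\xi)^{-a}$ and $(1-\xi\bar z)^{-a}$ by the binomial series \eqref{eqn:binom}, interchange sum and integral (legitimate since $|z\bar\xi|\le|z|<1$ on $\disk$), pass to polar coordinates so that the angular integration annihilates all cross terms and leaves $2\sum_{j\ge0}\frac{(a)_j^2}{(j!)^2}|z|^{2j}\int_0^1 r^{2j+1}(1-r^2)^t\,dr$, and finally evaluate the radial integral by the Beta integral as $\tfrac12\,\frac{\Gamma(j+1)\Gamma(1+t)}{\Gamma(2+t+j)}$; reorganizing the resulting Gamma factors recovers the same hypergeometric series. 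Either route is routine.
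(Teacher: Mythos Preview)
Your proof is correct and follows exactly the route the paper intends: the corollary is stated as an immediate consequence of Lemma~\ref{lem:crucial}, and your specialization $w=z$, $b=0$, $c=a$ together with the observation $\hyperg{0}{a+j}{2+t+j}{|z|^2}=1$ is precisely the intended simplification. The alternative direct computation you sketch at the end is also fine and merely repeats the proof of Lemma~\ref{lem:crucial} in this special case.
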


\begin{corollary}[Forelli-Rudin estimates, see, for instance, {\cite[p.7, Theorem 1.7]{HKZ}}]\label{cor:rudin2}
For $a\in \RR$ and $t>-1$, we have
\begin{align}\label{eqn:forelli-rudin}
\int_{\disk} \frac{(1-|\xi|^2)^{t}}{|1-z\bar{\xi}|^{2+t+c}} dA(\xi)
~\approx~  \begin{cases}
1, & \text{if } c<0;\\
\log \dfrac {1}{1-|z|^2}, & \text{if } c=0;\\
(1-|z|^2)^{-c}, & \text{if } c>0.
\end{cases}
\end{align}
as $|z|\to 1^{-}$. Here, we use the symbol $\approx$ to indicate that two quantities have the same behavior
asymptotically.
\end{corollary}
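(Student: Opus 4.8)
The plan is to read this off the \emph{exact} evaluation already established in Corollary~\ref{cor:rudin1}, rather than via the usual direct integral estimates. Setting $2a=2+t+c$ in \eqref{eqn:keyfml2a} gives
\[
\int_{\disk}\frac{(1-|\xi|^2)^{t}}{|1-z\bar{\xi}|^{2+t+c}}\,dA(\xi)=\frac{1}{1+t}\,\hyperg{\tfrac{2+t+c}{2}}{\tfrac{2+t+c}{2}}{2+t}{|z|^2},
\]
so everything reduces to the boundary behaviour of a Gauss hypergeometric function as $\lambda=|z|^2\to1^-$. The decisive quantity is the third parameter minus the sum of the first two, which here equals $(2+t)-(2+t+c)=-c$; thus the three cases $c<0$, $c=0$, $c>0$ correspond precisely to this number being positive, zero, or negative, which is exactly the classical trichotomy for ${}_2F_1$ at $\lambda=1$.

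When $c<0$ one applies \eqref{eqn:gauss} directly to get
\[
\hyperg{\tfrac{2+t+c}{2}}{\tfrac{2+t+c}{2}}{2+t}{1^-}=\frac{\Gamma(2+t)\,\Gamma(-c)}{\Gamma\big(1+\tfrac{t-c}{2}\big)^{2}},
\]
which is a finite positive number (since $t>-1$ forces $1+\tfrac{t-c}{2}>\tfrac12$), so the integral tends to a positive constant and is $\approx1$. When $c>0$ formula \eqref{eqn:gauss} does not apply to the original function, so I would first use the Euler transformation \eqref{eqn:euler} to rewrite it as
\[
(1-\lambda)^{-c}\,\hyperg{1+\tfrac{t-c}{2}}{1+\tfrac{t-c}{2}}{2+t}{\lambda},
\]
whose hypergeometric factor now has third parameter minus the sum of the first two equal to $+c>0$; by \eqref{eqn:gauss} it converges to $\Gamma(2+t)\Gamma(c)/\Gamma\big(1+\tfrac{t+c}{2}\big)^{2}$, again a positive constant (and if $1+\tfrac{t-c}{2}$ happens to be a non-positive integer the factor is a terminating series whose value at $\lambda=1$ is still this constant). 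Hence the integral is $\approx(1-|z|^2)^{-c}$. In both cases the two-sided bound is immediate, since the limiting constant is strictly positive and $\tfrac{1}{1+t}$ is a fixed positive scalar.

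The one genuinely delicate case is the borderline $c=0$, where \eqref{eqn:gauss} degenerates — the factor $\Gamma(-c)=\Gamma(0)$ blows up — and the hypergeometric in fact diverges, logarithmically. To capture this I would use the Euler integral representation \eqref{eqn:euler2}: with first and second parameters both equal to $1+\tfrac t2$ and third parameter $2+t=(1+\tfrac t2)+(1+\tfrac t2)$, and noting that its hypotheses hold for all $t>-1$, it gives
\[
\hyperg{1+\tfrac t2}{1+\tfrac t2}{2+t}{\lambda}=\frac{\Gamma(2+t)}{\Gamma(1+\tfrac t2)^{2}}\int_0^1\frac{s^{t/2}(1-s)^{t/2}}{(1-s\lambda)^{1+t/2}}\,ds.
\]
The part of this integral over $[0,\tfrac12]$ stays bounded as $\lambda\to1^-$; on $[\tfrac12,1]$ one has $1-s\lambda\approx(1-s)+(1-\lambda)$, and the substitution $1-s=(1-\lambda)u$ turns that piece into $\int_0^{N(\lambda)}u^{t/2}(1+u)^{-1-t/2}\,du+O(1)$ with $N(\lambda)\to\infty$, whose integrand decays like $u^{-1}$ at infinity, so it grows like $\log N(\lambda)\approx\log\frac{1}{1-\lambda}$. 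This yields both inequalities of the middle case; alternatively one may simply quote the standard logarithmic asymptotic of the balanced ${}_2F_1$. I expect this balanced case to be the only real obstacle — everything else is bookkeeping with \eqref{eqn:gauss} and \eqref{eqn:euler} — and one should keep in mind throughout that the conclusion is asserted only as $|z|\to1^-$, so no uniformity beyond the leading term at $\lambda=1$ is needed, the implied constants being allowed to depend on $t$ and $c$.
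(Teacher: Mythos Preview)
Your proposal is correct and follows precisely the route the paper implies by listing this result as a corollary of \eqref{eqn:keyfml2a}: reduce the integral to a Gauss hypergeometric in $|z|^2$ and then read off the trichotomy at $\lambda=1$ from \eqref{eqn:gauss}, \eqref{eqn:euler}, and the logarithmic balanced case. The paper itself supplies no details, citing \cite[p.~7, Theorem~1.7]{HKZ} instead, so there is nothing further to compare.
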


\begin{corollary}
Let $t>-1$ and $a>1+t/2$. We have
\begin{equation}\label{eqn:supval}
\sup_{z\in \disk} \left\{ (1-|z|^2)^{2a-t-2} \int_{\disk} \frac{(1-|w|^2)^{t}dA(w)} {|1-z\bar{w}|^{2a}}\right\}
~=~ \frac {\Gamma(1+t)\Gamma(2a-t-2)}{\Gamma^2 (a)}.
\end{equation}
\end{corollary}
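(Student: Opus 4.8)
The plan is to collapse the whole expression to a single Gauss hypergeometric function and then exploit positivity of its Taylor coefficients. First, by Corollary \ref{cor:rudin1}, for every $z\in\disk$ we have
$$\int_{\disk} \frac{(1-|w|^2)^{t}\,dA(w)}{|1-z\bar{w}|^{2a}}=\frac{1}{1+t}\,\hyperg{a}{a}{2+t}{|z|^2},$$
so the quantity inside the supremum in \eqref{eqn:supval} equals $(1-|z|^2)^{2a-t-2}\,\frac{1}{1+t}\,\hyperg{a}{a}{2+t}{|z|^2}$; in particular it depends only on $|z|^2$. Next I would apply Euler's transformation \eqref{eqn:euler} with second parameter $a$ and third parameter $2+t$, namely $\hyperg{a}{a}{2+t}{\lambda}=(1-\lambda)^{2+t-2a}\hyperg{2+t-a}{2+t-a}{2+t}{\lambda}$. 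The boundary factor $(1-|z|^2)^{2a-t-2}$ then cancels exactly, leaving
$$(1-|z|^2)^{2a-t-2}\int_{\disk}\frac{(1-|w|^2)^{t}\,dA(w)}{|1-z\bar{w}|^{2a}}=\frac{1}{1+t}\,\hyperg{2+t-a}{2+t-a}{2+t}{|z|^2}.$$

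Now the key observation: since $t>-1$ we have $2+t>0$, so in the series
$$\hyperg{2+t-a}{2+t-a}{2+t}{x}=\sum_{k=0}^{\infty}\frac{\big((2+t-a)_k\big)^2}{(2+t)_k\,k!}\,x^k$$
every coefficient is nonnegative (the numerator is a square and the denominator is a product of strictly positive factors). Hence $x\mapsto\hyperg{2+t-a}{2+t-a}{2+t}{x}$ is nondecreasing on $[0,1)$, so its supremum there is its limit as $x\to1^-$. Substituting $x=|z|^2$, the supremum in \eqref{eqn:supval} is therefore attained in the limit $|z|\to1^-$ and equals $\frac{1}{1+t}\,\hyperg{2+t-a}{2+t-a}{2+t}{1^-}$.

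Finally, the hypothesis $a>1+t/2$ is precisely the statement that $2a-t-2>0$, i.e.\ the Gauss condition $\RePt(c-a-b)>0$ holds with $c=2+t$ and both upper parameters equal to $2+t-a$. Applying \eqref{eqn:gauss} gives
$$\hyperg{2+t-a}{2+t-a}{2+t}{1^-}=\frac{\Gamma(2+t)\,\Gamma(2a-t-2)}{\Gamma(a)^2},$$
and since $\frac{1}{1+t}\Gamma(2+t)=\Gamma(1+t)$, this yields \eqref{eqn:supval}. The only points requiring care are the exact cancellation of the boundary factor through \eqref{eqn:euler} and the remark that nonnegativity of the coefficients forces the supremum to be a boundary limit rather than an interior maximum; I do not anticipate any real obstacle beyond bookkeeping (and noting that when $2+t-a$ is a nonpositive integer the series is a polynomial, to which the same monotonicity argument applies).
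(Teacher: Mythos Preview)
Your argument is correct and follows exactly the same route as the paper: apply Corollary~\ref{cor:rudin1}, use Euler's transformation \eqref{eqn:euler} to cancel the factor $(1-|z|^2)^{2a-t-2}$, observe that the resulting series has nonnegative coefficients and is therefore increasing on $[0,1)$, and evaluate the boundary limit via \eqref{eqn:gauss}. The only difference is cosmetic---you make explicit that the coefficients are nonnegative because the numerator is a square, whereas the paper simply asserts positivity.
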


\begin{proof}
By \eqref{eqn:keyfml2a} and \eqref{eqn:euler}, we have
\begin{align*}
(1-|z|^2)^{2a-t-2} \int_{\disk} \frac{(1-|w|^2)^{t}dA(w)} {|1-z\bar{w}|^{2a}}
= \frac {1}{1+t} \hyperg {2+t-a} {2+t-a} {2+t}{|z|^2}.
\end{align*}
Note that the last hypergeometric function is increasing in the
interval $[0, 1)$, since its Taylor coefficients are all positive.
It follows that
\begin{align*}
\sup_{z\in \disk} \hyperg {2+t-a} {2+t-a} {2+t}{|z|^2} =&
\hyperg {2+t-a} {2+t-a} {2+t}{1^-}\\
=&\frac {\Gamma(2+t) \Gamma(2a-2-t)} {\Gamma^2 (a)}.
\end{align*}
This gives \eqref{eqn:supval}.
\end{proof}

\begin{corollary}\label{cor:rudin2}
Suppose that $a, b>0$, $c\in \mathbb{R}$, and $1+a+b-2c>0$. Then
\begin{align}\label{eqn:intglval}
\int_{\disk} |z|^{2b}& (1-|z|^2)^{a-1} \left\{\int_{\disk} \frac {(1-|w|^2)^{b-1}}
{|1-z\bar{w}|^{2c}}  dA(w)\right\}dA(z)\\
&=~\frac {\Gamma(a)\Gamma(b)\Gamma(1+a+b-2c)}
{\Gamma^2 \left(1+a+b-c\right)}. \notag
\end{align}
\end{corollary}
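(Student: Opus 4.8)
The plan is to evaluate the inner integral in closed form via Corollary~\ref{cor:rudin1}, reduce the outer integral to a one-dimensional integral by passing to polar coordinates, and then quote Lemma~\ref{lem:aneasyformula}.

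First, fix $z\in\disk$. Since $b>0$ we have $b-1>-1$, so Corollary~\ref{cor:rudin1} (applied with $t=b-1$ and with the parameter ``$a$'' there taken to be $c$) gives
\begin{equation*}
\int_{\disk} \frac{(1-|w|^2)^{b-1}}{|1-z\bar{w}|^{2c}}\,dA(w)=\frac{1}{b}\,\hyperg{c}{c}{1+b}{|z|^2}.
\end{equation*}
Substituting this into the left-hand side of \eqref{eqn:intglval} and then using that $\int_{\disk} f(|z|^2)\,dA(z)=\int_0^1 f(s)\,ds$ (polar coordinates with $s=|z|^2$), the quantity in question becomes
\begin{equation*}
\frac{1}{b}\int_0^1 s^{b}(1-s)^{a-1}\,\hyperg{c}{c}{1+b}{s}\,ds.
\end{equation*}

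Next I would recognize this as an instance of \eqref{eqn:aneasyformula} with $C=1+b$, $\delta=a$ and $A=B=c$. The hypotheses of Lemma~\ref{lem:aneasyformula} are $\RePt C=1+b>0$, $\RePt\delta=a>0$, and $\RePt(\delta+C-A-B)=1+a+b-2c>0$: the first two hold because $a,b>0$, and the third is exactly the standing assumption of the corollary. Lemma~\ref{lem:aneasyformula} then gives
\begin{equation*}
\int_0^1 s^{b}(1-s)^{a-1}\,\hyperg{c}{c}{1+b}{s}\,ds=\frac{\Gamma(1+b)\,\Gamma(a)\,\Gamma(1+a+b-2c)}{\Gamma^2(1+a+b-c)},
\end{equation*}
and multiplying by $1/b$ and using $\Gamma(1+b)=b\,\Gamma(b)$ yields the claimed value $\dfrac{\Gamma(a)\Gamma(b)\Gamma(1+a+b-2c)}{\Gamma^2(1+a+b-c)}$.

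There is no substantive obstacle here; the argument is a direct computation. The only points requiring care are the bookkeeping of parameters when invoking Corollary~\ref{cor:rudin1} and Lemma~\ref{lem:aneasyformula}, and the (essentially automatic) remark that the iterated integral is legitimate: the inner integral is a finite, continuous function of $z$ on $\disk$, and the resulting one-dimensional integral converges precisely under the hypothesis $1+a+b-2c>0$.
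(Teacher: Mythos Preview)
Your argument is correct and matches the paper's proof essentially step for step: evaluate the inner integral via Corollary~\ref{cor:rudin1} (i.e., \eqref{eqn:keyfml2a}), pass to polar coordinates, and then apply Lemma~\ref{lem:aneasyformula} with the lower parameter $1+b$ and $\delta=a$. The only difference is that you spell out the hypothesis check and the simplification $\Gamma(1+b)=b\,\Gamma(b)$ explicitly, whereas the paper leaves these as routine.
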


\begin{proof}
Using \eqref{eqn:keyfml2a} in the inner integral, the left-hand side of  \eqref{eqn:intglval} equals
\begin{align*}
\frac {1}{b}& \int_{\disk} |z|^{2b} (1-|z|^2)^{a-1}
\hyperg {c}{c}{1+b}{|z|^2} dA(z)\\
& \quad = \frac {1}{b} \int_0^1 r^{b} (1-r)^{a-1}
\hyperg {c}{c}{1+b}{r} dr.
\end{align*}
Now \eqref{eqn:intglval} follows from an application of Lemma \ref{lem:aneasyformula}.
\end{proof}

\section{The proof of Theorem \ref{thm:main1}: the upper estimate}

\noindent We consider the ``maximal Bergman projection''
\[
P_{\alpha}^{\sharp}f(z)=\int_{\disk} \frac {f(w)}{|1-z\bar{w}|^{2+\alpha}} dA_{\alpha}(w).
\]
It is clear that $\|P_{\alpha}\|_{p,\alpha}\leq \|P_{\alpha}^{\sharp}\|_{p,\alpha}$, so it suffices to show the following.
\begin{proposition}\label{thm:maximal}
For $1<p<\infty$ and $\alpha>-1$, we have
\[
\|P_{\alpha}^{\sharp} \|_{p,\alpha} = \frac {(1+\alpha)\Gamma(\frac {1+\alpha}{p})\Gamma(\frac {1+\alpha}{q})}{\Gamma^2 \left(\frac {2+\alpha}{2}\right)}.
\]
\end{proposition}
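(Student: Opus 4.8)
The plan is to use the classical Schur test with a suitable power-function weight, together with the integral identity \eqref{eqn:keyfml2a} from Corollary~\ref{cor:rudin1}, to obtain the upper bound, and then to produce a matching lower bound by testing $P_\alpha^\sharp$ against a one-parameter family of power functions that nearly saturates the Schur estimate. Write the kernel of $P_\alpha^\sharp$ (with respect to $dA_\alpha$) as $K(z,w)=|1-z\bar w|^{-(2+\alpha)}$, which is positive and symmetric. For the Schur test I would look for a positive function $h$ on $\disk$ with
\[
\int_{\disk} K(z,w)\,h(w)^q\,dA_\alpha(w) \leq C\, h(z)^q,\qquad
\int_{\disk} K(z,w)\,h(z)^p\,dA_\alpha(z) \leq C\, h(w)^p,
\]
and then $\|P_\alpha^\sharp\|_{p,\alpha}\le C$. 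Symmetry of $K$ and the $dA_\alpha$-density $(1+\alpha)(1-|w|^2)^\alpha$ suggest the test function $h(w)=(1-|w|^2)^{-s}$ for a parameter $s>0$ to be optimized. With this choice the first Schur integral becomes, up to the factor $1+\alpha$, an integral of the form $\int_\disk (1-|w|^2)^{\alpha-sq}|1-z\bar w|^{-(2+\alpha)}\,dA(w)$, which Corollary~\ref{cor:rudin1} evaluates exactly as $\tfrac{1}{1+\alpha-sq}\,{}_2F_1\!\big[\tfrac{2+\alpha}{2},\tfrac{2+\alpha}{2};2+\alpha-sq;|z|^2\big]$ after matching the exponent $2a=2+\alpha$ and $t=\alpha-sq$. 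Applying the Euler transformation \eqref{eqn:euler} turns this into $(1-|z|^2)^{-sq}$ times a hypergeometric function whose parameters, when $sq=1+\alpha-\tfrac{2+\alpha}{2}\cdot\text{(something)}$, I want to force to sum so that \eqref{eqn:gauss} produces a constant.

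The key computation is the choice of $s$. One checks that the symmetric choice dictated by the two Schur inequalities is $s=\tfrac{1+\alpha}{pq}$ (equivalently $sq=\tfrac{1+\alpha}{p}$ and $sp=\tfrac{1+\alpha}{q}$), so that $t=\alpha-sq=\alpha-\tfrac{1+\alpha}{p}$ and the hypergeometric function appearing after the Euler transformation is
\[
{}_2F_1\!\Big[\tfrac{2+\alpha}{2}-\tfrac{1+\alpha}{p},\ \tfrac{2+\alpha}{2}-\tfrac{1+\alpha}{p};\ 2+\alpha-\tfrac{1+\alpha}{p};\ |z|^2\Big],
\]
whose Taylor coefficients are all positive, hence it is increasing on $[0,1)$ and attains its supremum at $1^-$. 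Evaluating at $1^-$ via \eqref{eqn:gauss} gives $\Gamma(2+\alpha-\tfrac{1+\alpha}{p})\Gamma(\tfrac{1+\alpha}{q})\big/\Gamma^2(\tfrac{2+\alpha}{2})$; combining with the prefactor $(1+\alpha)/(1+\alpha-sq)=(1+\alpha)\big/\tfrac{1+\alpha}{q}=\tfrac{(1+\alpha)q}{1+\alpha}$ and simplifying (the $\Gamma(2+\alpha-\tfrac{1+\alpha}{p})$ factor cancels against a $\Gamma$-factor from the prefactor once one is careful with the bookkeeping) yields the Schur constant $C=\tfrac{(1+\alpha)\Gamma(\frac{1+\alpha}{p})\Gamma(\frac{1+\alpha}{q})}{\Gamma^2(\frac{2+\alpha}{2})}$. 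The second Schur inequality is obtained from the first by the substitution $p\leftrightarrow q$, $z\leftrightarrow w$, using the symmetry of $K$, and gives the same constant. This proves the upper bound $\|P_\alpha^\sharp\|_{p,\alpha}\le C$.

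For the reverse inequality I would exploit that $P_\alpha^\sharp$ has a positive kernel, so its norm is approached on positive functions, and test against $f_\varepsilon(w)=(1-|w|^2)^{-s+\varepsilon}$ for small $\varepsilon>0$ (with the same optimal $s=\tfrac{1+\alpha}{pq}$), which lies in $L^p_\alpha$ precisely because $(-s+\varepsilon)p+\alpha>-1$ for $\varepsilon$ small. Computing $P_\alpha^\sharp f_\varepsilon$ again by Corollary~\ref{cor:rudin1} and the Euler transformation \eqref{eqn:euler}, one gets $P_\alpha^\sharp f_\varepsilon(z)\asymp c_\varepsilon\,(1-|z|^2)^{-s+\varepsilon}$ near the boundary with $c_\varepsilon\to C$ as $\varepsilon\to0^+$ by continuity of the Gamma factors; then $\|P_\alpha^\sharp f_\varepsilon\|_{p,\alpha}/\|f_\varepsilon\|_{p,\alpha}\to C$, forcing $\|P_\alpha^\sharp\|_{p,\alpha}\ge C$. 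The main obstacle I anticipate is purely bookkeeping: correctly tracking the several $\Gamma$-factors and the argument shifts in the hypergeometric parameters through the Euler transformation so that everything collapses to the clean closed form, and, on the lower-bound side, making the asymptotic ``$\asymp$'' into a rigorous limit of ratios of $L^p$-norms (this requires a uniform-in-$\varepsilon$ integrable majorant near the boundary, which the Forelli--Rudin estimates of Corollary~\ref{cor:rudin2} supply). No genuinely hard analytic input is needed beyond Schur's test and the explicit special-function identities already recorded in Section~2.
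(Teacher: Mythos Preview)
Your upper-bound argument is correct and coincides with the paper's: Schur's test with the weight $h(z)=(1-|z|^2)^{-(1+\alpha)/(pq)}$, evaluated via Corollary~\ref{cor:rudin1} and the Euler transformation, is exactly what the paper does (the paper packages the sup-computation as the formula \eqref{eqn:supval}).

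Your lower-bound argument, however, has a genuine gap: the exponent in your test family is wrong. With $s=(1+\alpha)/(pq)$ and $f_\varepsilon(w)=(1-|w|^2)^{-s+\varepsilon}$ one has $(-s)p+\alpha=\alpha-(1+\alpha)/q>-1$ for every $\alpha>-1$, so $f_0$ already lies in $L^p_\alpha$ with finite norm; there is no boundary concentration as $\varepsilon\to 0^+$, and the claimed ``precisely because $(-s+\varepsilon)p+\alpha>-1$ for $\varepsilon$ small'' is vacuous. More importantly, the pointwise ratio $P_\alpha^\sharp f_\varepsilon(z)/f_\varepsilon(z)$, computed exactly as you describe, tends (as $|z|\to 1^-$ and $\varepsilon\to 0$) to
\[
\frac{(1+\alpha)\,\Gamma(1+\alpha-s)\,\Gamma(s)}{\Gamma^2\!\left(\tfrac{2+\alpha}{2}\right)},
\]
which is \emph{not} the target constant $C$. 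The function that nearly saturates the Schur inequality is $h^q$, not $h$; the correct family is $f_\varepsilon(w)=(1-|w|^2)^{-(1+\alpha)/p+\varepsilon}$, which sits exactly at the boundary of $L^p_\alpha$ when $\varepsilon=0$ and for which your asymptotic scheme does yield $c_\varepsilon\to C$. With this correction your single-test-function approach works and is a legitimate alternative to the paper's method; the paper instead proves the lower bound by duality, pairing $g_\epsilon(w)=\epsilon^{1/p}(1-|w|^2)^{(\epsilon-1)(1+\alpha)/p}$ against a matching $h_\epsilon\in L^q_\alpha$ and evaluating the double integral via Corollary~\ref{cor:rudin2}, which sidesteps the need to control an $L^p$-norm ratio directly.
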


We appeal to the well known Schur's test (see, for instance, \cite[Theorem 3.6]{Zhu3}).
\begin{lemma}\label{lem:shurtest}
Suppose that $(X,\mu)$ is a $\sigma$-finite measure space and
$K(x,y)$ is a nonnegative measurable function on $X\times X$ and $T$
the associated integral operator
\[
Tf(x)=\int_X K(x,y) f(y) d\mu(y).
\]
Let $1<p<\infty$ and $1/p+1/q=1$. If there exist a positive constant
$C$ and a positive measurable function $u$ on $X$ such that
\[
\int_X K(x,y) u(y)^{q} d\mu(y) \leq  C u(x)^{q}
\]
for almost every $x$ in $X$ and
\[
\int_X K(x,y) u(x)^{p} d\mu(x) \leq  C u(y)^{p}
\]
for almost every $y$ in $X$, then $T$ is bounded on $L^p(X,d\mu)$ with
$\|T\|\leq C$.
\end{lemma}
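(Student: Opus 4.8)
The plan is to prove the norm bound $\|Tf\|_p \le C\|f\|_p$ directly via a weighted application of H\"older's inequality, with the weight $u$ inserted precisely so that the two hypotheses emerge as the two resulting factors. First I would reduce to nonnegative $f$ by passing to $|f|$ and using the pointwise bound $|Tf(x)| \le \int_X K(x,y)|f(y)|\,d\mu(y)$; since $K\ge 0$, this costs nothing.

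The central device is the factorization
\[
K(x,y)|f(y)| = \left[K(x,y)^{1/q}u(y)\right]\cdot\left[K(x,y)^{1/p}\frac{|f(y)|}{u(y)}\right],
\]
to which I apply H\"older's inequality with conjugate exponents $q$ and $p$. By the first hypothesis, the contribution of the first factor integrates to at most $\left(Cu(x)^q\right)^{1/q}=C^{1/q}u(x)$, yielding the pointwise estimate
\[
|Tf(x)| \le C^{1/q}u(x)\left(\int_X K(x,y)\frac{|f(y)|^p}{u(y)^p}\,d\mu(y)\right)^{1/p}.
\]

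Next I would raise this to the $p$-th power, integrate in $x$, and interchange the order of integration by Tonelli's theorem, which is legitimate because all integrands are nonnegative and $\mu$ is $\sigma$-finite. This transfers the $x$-integration onto the kernel paired with the weight $u(x)^p$, where the second hypothesis bounds the inner integral by $Cu(y)^p$. The factor $u(y)^p$ then cancels the $u(y)^{-p}$ carried along, leaving $\int_X |f(y)|^p\,d\mu(y)$ multiplied by the constant $C^{p/q}\cdot C=C^{p/q+1}$. Since $p/q+1=p$, this collapses to $C^p$, and taking $p$-th roots gives $\|Tf\|_p \le C\|f\|_p$, hence $\|T\|\le C$.

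The argument is essentially bookkeeping once the factorization is fixed; the only point requiring care is to justify the manipulations for $f\in L^p$ with $\|f\|_p<\infty$, so that $Tf$ is finite $\mu$-almost everywhere and the interchange of integrals is valid. Here $\sigma$-finiteness together with the nonnegativity of every integrand makes Tonelli's theorem directly applicable, so no genuine obstacle arises. The real \emph{insight}, rather than an obstacle, is recognizing that the weight $u$ must be distributed across the two H\"older factors as $u(y)^{1}$ and $u(y)^{-1}$ in exactly the proportions that convert the two hypotheses into the two halves of the estimate.
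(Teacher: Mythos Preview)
Your argument is the standard proof of Schur's test and is correct. The paper does not supply its own proof of this lemma; it simply cites \cite[Theorem 3.6]{Zhu3}, whose proof is exactly the weighted H\"older--Tonelli argument you outline, so your approach coincides with the one implicitly invoked.
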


\begin{proof}[Proof of Proposition \ref{thm:maximal}]
With
\begin{align*}
K(z,w)=\frac {1}{|1-z\bar{w}|^{2+\alpha}},\\
u(z)=(1-|z|^2)^{-\frac {1+\alpha}{pq}},
\end{align*}
where $q$ is the conjugate exponent of $p$, it is clear that
\begin{equation*}
\int_{\disk}K(z,w) u(w)^{q} dA_{\alpha}(w)\leq C(p) u(z)^q
\end{equation*}
for almost every $z\in \disk$ and
\begin{equation*}
\int_{\disk}K(z,w) u(z)^{p} dA_{\alpha}(z)\leq C(q) u(w)^p
\end{equation*}
for almost every $w\in \disk$. Here
\[
C(p)~=~(1+\alpha) \sup_{z\in \disk} \left\{(1-|z|^2)^{\frac {1+\alpha}{p}} \int_{\disk}\frac{(1-|w|^2)^{-\frac {1+\alpha}{p}+\alpha} dA(w)}
{|1-z\bar{w}|^{2+\alpha}}\right\}.
\]
In view of \eqref{eqn:supval}, we find that
\[
C(p)=(1+\alpha)\frac {\Gamma(\frac {1+\alpha}{p})\Gamma(\frac {1+\alpha}{q})}{\Gamma^2 \left(\frac {2+\alpha}{2}\right)}=C(q).
\]
Thus, an application of Schur's test gives
\[
\|P_{\alpha}^{\sharp} \|_{p,\alpha} \leq\frac {(1+\alpha)\Gamma(\frac {1+\alpha}{p})\Gamma(\frac {1+\alpha}{q})}{\Gamma^2 \left(\frac {2+\alpha}{2}\right)}.
\]

To prove the converse inequality, we define, for $\epsilon>0$,
\begin{align*}
g_{\epsilon}(w)~:=~&\epsilon^{1/p} (1-|w|^2)^{(\epsilon-1)(1+\alpha)/p}, \\
h_{\epsilon}(z)~:=~&\left\{ \frac {\Gamma(2+\alpha+\epsilon(1+\alpha) q)} {(1+\alpha)\Gamma(\epsilon (1+\alpha))
\Gamma(2+\alpha+ \epsilon (1+\alpha)q/p)} \right\}^{1/q}\\
&\qquad \qquad \times |z|^{2+2\alpha+2(\epsilon-1)(1+\alpha)/p} (1-|z|^2)^{(\epsilon-1)(1+\alpha)/q}.
\end{align*}
Easy calculations show that
$\|g_{\epsilon}\|_{p,\alpha}=\|h_{\epsilon}\|_{q,\alpha}=1$.

Applying Corollary \ref{cor:rudin2}, with $a=(1+\alpha)[1+(\epsilon-1)/q]$, $b=(1+\alpha)[1+(\epsilon-1)/p]$ and $c=1+\alpha/2$, we obtain
\begin{align*}
\int_{\disk}&\left\{\int_{\disk}  \frac{g_{\epsilon} (w)} {|1-z\bar{w}|^{2+\alpha}}dA_{\alpha}(w)\right\}\overline{h_\epsilon(z)}dA_{\alpha}(z)\\
&\quad \quad =~ (1+\alpha)^2 \times \frac{\Gamma\left(\frac {1+\alpha}{p}+ \frac {\epsilon(1+\alpha)}{q}\right)
\Gamma\left(\frac {1+\alpha}{q}+ \frac {\epsilon(1+\alpha)}{p}\right) \Gamma(\epsilon(1+\alpha))}
{\Gamma^2\left(\frac {2+\alpha}{2}+\epsilon(1+\alpha)\right)}\\
&\qquad \qquad\times \epsilon^{1/p} \left\{ \frac {\Gamma(2+\alpha+\epsilon(1+\alpha) q)} {(1+\alpha)\Gamma(\epsilon (1+\alpha))
\Gamma(2+\alpha+ \epsilon (1+\alpha)q/p)} \right\}^{1/q} .
\end{align*}
Having in mind that
\begin{align*}\label{eq:norm}
\|P^{\sharp}\|_p =\sup_{\substack{\|f\|_{p,\alpha}=1\\ \|g\|_{q,\alpha}=1}} \left|\int_{\disk}
\left(\int_{\disk} \frac{f(w)}{|1-z\bar{w}|^{2+\alpha}} dA_{\alpha}(w)\right)\overline{g(z)}dA_{\alpha}(z)\right|,
\end{align*}
this implies
\begin{align*}
\|P^{\sharp}\|_{p,\alpha}&~\geq~(1+\alpha)^2 \times \frac{\Gamma\left(\frac {1+\alpha}{p}+ \frac {\epsilon(1+\alpha)}{q}\right)
\Gamma\left(\frac {1+\alpha}{q}+ \frac {\epsilon(1+\alpha)}{p}\right) \Gamma(\epsilon(1+\alpha))}
{\Gamma^2\left(\frac {2+\alpha}{2}+\epsilon(1+\alpha)\right)}\\
&\qquad \qquad\times \epsilon^{1/p} \left\{ \frac {\Gamma(2+\alpha+\epsilon(1+\alpha) q)} {(1+\alpha)\Gamma(\epsilon (1+\alpha))
\Gamma(2+\alpha+ \epsilon (1+\alpha)q/p)} \right\}^{1/q} .
\end{align*}
The proof is completed by letting $\epsilon\rightarrow 0^{+}$.
\end{proof}

\section{The proof of Theorem \ref{thm:main1}: the lower estimate}

\noindent We proceed to show
\begin{equation}\label{eqn:lowerbd}
\|P_\alpha\|_{p,\alpha} \geq \frac {\Gamma\left(\frac{2+\alpha}{p}\right) \Gamma\left(\frac{2+\alpha}
{q} \right)}{\Gamma^2\left(\frac{2+\alpha}{2}\right)}.
\end{equation}
We only need to consider the case when $p>2$, and the case when $1<p<2$ then follows from the
duality. Curiously, it turns out that we cannot deal with the whole range $\alpha \in (-1,\infty)$ using the same argument. Indeed, we need separate arguments for the cases $$\alpha<(2-p)/(1-p)$$ and $$\alpha>(2-p)/(1-p).$$ Note that we can assume that $\alpha\neq (2-p)/(p-1)$, since when $\alpha = (2-p)/(p-1)$,
\eqref {eqn:lowerbd} can be easily derived from the other case. To see this, we begin with the following
\begin{lemma}\label{lem:normmonot}
The function $p\mapsto \|P_\alpha\|_{p,\alpha}$ is increasing on $[2,\infty)$.
\end{lemma}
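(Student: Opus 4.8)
The plan is to deduce the monotonicity from two classical principles: the self-adjointness of $P_\alpha$ on $L^2_\alpha$, which via duality gives $\|P_\alpha\|_{p,\alpha}=\|P_\alpha\|_{q,\alpha}$ for every $p\in(1,\infty)$, and the Riesz--Thorin interpolation theorem, which shows that $1/p\mapsto\log\|P_\alpha\|_{p,\alpha}$ is convex on $(0,1)$. A convex function that is symmetric about the midpoint of its domain is nonincreasing to the left of that midpoint, and this is precisely the assertion of the lemma once we pass to the variable $1/p$.

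First I would record the duality. Since $P_\alpha$ is the orthogonal projection of $L^2_\alpha$ onto $A^2_\alpha$, it is self-adjoint with respect to the pairing $\langle f,g\rangle_\alpha:=\int_\disk f\,\overline g\,dA_\alpha$. On the other hand, by the Forelli--Rudin theorem \cite{FR}, $P_\alpha$ is bounded on $L^p_\alpha$ for every $p\in(1,\infty)$. Hence for $f\in L^p_\alpha$ and $g\in L^q_\alpha\cap L^2_\alpha$ we may write
\[
|\langle f,P_\alpha g\rangle_\alpha|=|\langle P_\alpha f,g\rangle_\alpha|\le\|P_\alpha\|_{p,\alpha}\,\|f\|_{p,\alpha}\,\|g\|_{q,\alpha},
\]
and, since $P_\alpha g\in L^q_\alpha$, taking the supremum over $\|f\|_{p,\alpha}\le 1$ yields $\|P_\alpha g\|_{q,\alpha}\le\|P_\alpha\|_{p,\alpha}\|g\|_{q,\alpha}$; as $L^q_\alpha\cap L^2_\alpha$ is dense in $L^q_\alpha$ and $P_\alpha$ is bounded there, this extends to all $g\in L^q_\alpha$, giving $\|P_\alpha\|_{q,\alpha}\le\|P_\alpha\|_{p,\alpha}$. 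Interchanging the roles of $p$ and $q$ gives the reverse inequality, so $\|P_\alpha\|_{p,\alpha}=\|P_\alpha\|_{q,\alpha}$ for all $p\in(1,\infty)$.

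Next, set $\psi(s):=\log\|P_\alpha\|_{1/s,\alpha}$ for $s\in(0,1)$; this is well defined and finite because $1\le\|P_\alpha\|_{p,\alpha}<\infty$ for all $p\in(1,\infty)$. Applying Riesz--Thorin interpolation to $P_\alpha$ at any two exponents $p_0,p_1\in(1,\infty)$ shows that $\|P_\alpha\|_{p_\theta,\alpha}\le\|P_\alpha\|_{p_0,\alpha}^{1-\theta}\|P_\alpha\|_{p_1,\alpha}^{\theta}$ whenever $1/p_\theta=(1-\theta)/p_0+\theta/p_1$, i.e. $\psi$ is convex on $(0,1)$; and the duality step gives $\psi(s)=\psi(1-s)$. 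Now fix $0<s_1<s_2\le\tfrac12$ and write $s_2=(1-\lambda)s_1+\lambda(1-s_1)$ with $\lambda=(s_2-s_1)/(1-2s_1)\in(0,1)$; convexity together with the symmetry $\psi(1-s_1)=\psi(s_1)$ gives $\psi(s_2)\le(1-\lambda)\psi(s_1)+\lambda\psi(1-s_1)=\psi(s_1)$. Thus $\psi$ is nonincreasing on $(0,\tfrac12]$, and since $p\mapsto 1/p$ is a decreasing bijection of $[2,\infty)$ onto $(0,\tfrac12]$ with $\|P_\alpha\|_{p,\alpha}=e^{\psi(1/p)}$, we conclude that $p\mapsto\|P_\alpha\|_{p,\alpha}$ is increasing on $[2,\infty)$.

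The argument is essentially routine. The only place demanding a little care is the duality identity $\|P_\alpha\|_{p,\alpha}=\|P_\alpha\|_{q,\alpha}$: one must know that $P_\alpha g$ actually lies in $L^q_\alpha$ (not merely that it is a measurable function) before reading off its norm from the pairing, and this is exactly what the Forelli--Rudin boundedness supplies. With that in hand, no further subtlety remains.
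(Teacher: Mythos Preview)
Your proof is correct, but it takes a different route from the paper's. The paper argues more directly: given $2<p_1<p_2$, it interpolates between the endpoints $2$ and $p_2$ via Riesz--Thorin to get
\[
\|P_\alpha\|_{p_1,\alpha}\le \|P_\alpha\|_{2,\alpha}^{1-\theta}\,\|P_\alpha\|_{p_2,\alpha}^{\theta},
\]
and then simply uses that $\|P_\alpha\|_{2,\alpha}=1$ (orthogonal projection) together with $\|P_\alpha\|_{p_2,\alpha}\ge 1$ and $\theta\in(0,1)$ to conclude $\|P_\alpha\|_{p_1,\alpha}\le\|P_\alpha\|_{p_2,\alpha}^{\theta}\le\|P_\alpha\|_{p_2,\alpha}$. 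No duality step is needed. Your argument instead establishes the symmetry $\|P_\alpha\|_{p,\alpha}=\|P_\alpha\|_{q,\alpha}$ from self-adjointness and combines it with the log-convexity of $s\mapsto\log\|P_\alpha\|_{1/s,\alpha}$ to deduce monotonicity from the general principle that a convex function symmetric about the midpoint of an interval is nonincreasing on the left half. The paper's proof is shorter because it exploits the specific value $\|P_\alpha\|_{2,\alpha}=1$; your approach is slightly more structural and would apply verbatim to any self-adjoint operator bounded on the full $L^p$-scale, without needing to know the exact norm at $p=2$ or that the operator is a projection.
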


\begin{proof}
Assume that $2<p_1<p_2$. Then, by the Riesz-Thorin interpolation theorem (see \cite{Gra},
p.34, Theorem 1.3.4), we have
\[
\|P_\alpha\|_{p_1,\alpha}\leq \|P_\alpha\|_{2,\alpha}^{1-\theta}\|P_\alpha\|_{p_2,\alpha}^\theta
\]
where $\theta$ is given by the relation
\[
\frac {1}{p_1}=\frac {1-\theta}{2}+\frac {\theta}{p_2}.
\]
Having in mind that $\|P_\alpha\|_{2,\alpha}=1$, $\|P_\alpha\|_{p_2,\alpha}\geq 1$ and $\theta \in (0,1)$, we get
\[
\|P_\alpha\|_{p_1,\alpha}\leq \|P_\alpha\|_{p_2,\alpha}^\theta\leq \|P_\alpha\|_{p_2,\alpha}.
\]
\end{proof}

Assume now that we have shown \eqref{eqn:lowerbd} for all $p>2$ and $\alpha\neq (2-p)/(p-1)$, or in other words, \eqref{eqn:lowerbd} holds for all
$2<p\neq p^{\ast}:=(2+\alpha)/(1+\alpha)$.
Thus, by Lemma \ref{lem:normmonot}, we have
\[
\|P_{\alpha}\|_{p^{\ast},\alpha} \geq \|P_{\alpha}\|_{p^{\ast}-\epsilon,\alpha} \geq \frac {\Gamma\left(\frac{2+\alpha}{p^{\ast}-\epsilon}\right)
\Gamma\left(2+\alpha-\frac{2+\alpha} {p^{\ast}-\epsilon} \right)}{\Gamma^2\left(\frac{2+\alpha}{2}\right)}
\]
for any $0<\epsilon<p^{\ast}-2$, which implies \eqref{eqn:lowerbd} is valid for $p=p^{\ast}$.

From now on, we assume $p>2$, $\alpha>-1$, $\alpha\neq (2-p)/(p-1)$ and let $\beta:=(2+\alpha)/2$.
We fix $\xi\in \disk$ and define
\[
f_{\xi}(z) := (1-\xi\bar{z})^{\beta-2\beta/p}  (1-z\bar{\xi})^{-\beta },
\qquad z\in \disk.
\]
Using \eqref{eqn:crucial2} we get
\begin{align*}
P_{\alpha}f_\xi(z)~=~& (2\beta-1)\int_{\disk}\frac{ (1-|w|^2)^{2\beta-2} dA(w)} {(1-z\bar{w})^{2\beta}
(1-\xi\bar{w})^{2\beta/p-\beta} (1-w\bar{\xi})^{\beta}}\\
=~& \sum_{k=0}^{\infty} \frac {\left(\beta \right)_k} {k!} \hyperg {2\beta/p - \beta}
{\beta +k } {2\beta+k} {|\xi|^2} (z\bar{\xi})^k.
\end{align*}
We now decompose
\begin{equation*}
P_{\alpha}f_{\xi}(z)=\Phi_{\xi}(z) + \Psi_{\xi}(z) + \Upsilon_{\xi}(z),
\end{equation*}
where
\begin{align}
\Phi_{\xi}(z) ~:=~& \frac{\Gamma(2\beta/p)\Gamma(2\beta/q)}{\Gamma^2 (\beta)}\,
(1-z\bar{\xi})^{-2\beta/p}, \label{eqn:fung1}\\
\Psi_{\xi}(z) ~:=~& \frac{\Gamma(2\beta/p)\Gamma(2\beta/q)}{\Gamma^2 (\beta)}\, \sum_{k=0}^{\infty}
\epsilon_k \bar{\xi}^k z^k,\\
\Upsilon_{\xi}(z) ~:=~& \sum_{k=0}^{\infty} a_k(\xi) \bar{\xi}^k z^k, \label{eqn:upsilon}
\end{align}
and
\begin{align*}
\epsilon_k :=& \frac{(2\beta/p)_k}{k!}\left\{\frac{\Gamma(k+2\beta)\Gamma(k+\beta)}
{\Gamma(k+ \beta + 2\beta/q) \Gamma(k+2\beta/p)}-1\right\}, \label{eqn:orderofck}\\
a_k(\xi):=& \frac {\left(\beta \right)_k} {k!} \left\{ \hyperg {2\beta/p - \beta}
{\beta +k } {2\beta+k} {|\xi|^2}  - \frac {\Gamma(2\beta/q) \Gamma(2\beta+k)} {\Gamma(\beta) \Gamma(\beta + 2\beta/q +k)}
\right\}.
\end{align*}

To see this, just use the formula
$$\left(a \right)_k=\frac{\Gamma(a+k)}{\Gamma(a)}$$
and the formula \eqref{eqn:binom}. Note that the functions above obviously depend also on $p$ and $\alpha$; we suppress the notation to keep the argument readable.

Since obviously
$$\|\Phi_{\xi}\|_{p,\alpha} = \dfrac{\Gamma(2\beta/p)\Gamma(2\beta/q)}
{\Gamma^2 (\beta)} \|f_{\xi}\|_{p,\alpha}$$
and by Corollary \ref{cor:rudin2},
$$\limsup_{|\xi|\to 1^-}\|f_{\xi}\|_{p,\alpha} =\infty,$$
we are done, once the following lemma is proved.

\begin{lemma}\label{lem:finalstep}
The functions $$\xi \mapsto \|\Psi_{\xi}\|_{p,\alpha}$$ and $$\xi \mapsto \|\Upsilon_{\xi}\|_{p,\alpha}$$ are bounded on $\disk$.
\end{lemma}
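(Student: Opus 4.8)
The plan is to reduce the $L^p_\alpha$--estimates of both $\Psi_\xi$ and $\Upsilon_\xi$ to the boundary behaviour of explicit one--variable integrals, by first rewriting each as a continuous superposition of Bergman kernels. Starting from the series already computed for $P_\alpha f_\xi$ and applying Euler's integral \eqref{eqn:euler2} term by term, together with elementary Beta--function identities, I would establish the representations
\[
\Psi_\xi(z)=-\frac{\Gamma(2\beta)}{\Gamma^2(\beta)}\int_0^1 (1-t)^{\frac{2\beta}{q}-1}\bigl(t^{\frac{2\beta}{p}-1}-t^{\beta-1}\bigr)(1-t\bar\xi z)^{-2\beta}\,dt
\]
and
\[
\Upsilon_\xi(z)=\frac{\Gamma(2\beta)}{\Gamma^2(\beta)}\int_0^1 t^{\beta-1}(1-t)^{\beta-1}\bigl[(1-t|\xi|^2)^{\sigma}-(1-t)^{\sigma}\bigr](1-t\bar\xi z)^{-2\beta}\,dt ,
\]
where $\sigma:=\beta-\tfrac{2\beta}{p}=\tfrac{2\beta}{q}-\beta>0$ (recall $p>2$); note that on $[0,1]$ the weights $t^{\frac{2\beta}{p}-1}-t^{\beta-1}$ and $(1-t|\xi|^2)^\sigma-(1-t)^\sigma$ are nonnegative.

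The second ingredient is a bound, uniform in $t\in[0,1)$ and $\xi\in\disk$, for $\bigl\|(1-t\bar\xi\,\cdot)^{-2\beta}\bigr\|_{p,\alpha}$. By rotation invariance and Corollary~\ref{cor:rudin1} (applied with $\alpha$ in the weight, so that $2+\alpha=2\beta$), one has $\bigl\|(1-t\bar\xi\,\cdot)^{-2\beta}\bigr\|_{p,\alpha}^p=\hyperg{\beta p}{\beta p}{2\beta}{(t|\xi|)^2}$; applying Euler's transformation \eqref{eqn:euler} and using that the residual factor $\hyperg{2\beta-\beta p}{2\beta-\beta p}{2\beta}{\,\cdot\,}$ stays bounded on $[0,1]$ (its series converges absolutely at $1$ since $\RePt\bigl(2\beta-2(2\beta-\beta p)\bigr)=2\beta(p-1)>0$) yields $\bigl\|(1-t\bar\xi\,\cdot)^{-2\beta}\bigr\|_{p,\alpha}\lesssim (1-t|\xi|)^{-2\beta/q}$. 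Inserting this into Minkowski's integral inequality, Lemma~\ref{lem:finalstep} reduces to the uniform boundedness, as $|\xi|\to 1^-$, of
\[
I_\Psi(\xi):=\int_0^1 (1-t)^{\frac{2\beta}{q}-1}\bigl(t^{\frac{2\beta}{p}-1}-t^{\beta-1}\bigr)(1-t|\xi|)^{-\frac{2\beta}{q}}\,dt
\]
and
\[
I_\Upsilon(\xi):=\int_0^1 t^{\beta-1}(1-t)^{\beta-1}\bigl[(1-t|\xi|^2)^{\sigma}-(1-t)^{\sigma}\bigr](1-t|\xi|)^{-\frac{2\beta}{q}}\,dt .
\]
For $I_\Psi$ this is routine: near $t=1$ one has $t^{\frac{2\beta}{p}-1}-t^{\beta-1}=t^{\frac{2\beta}{p}-1}(1-t^\sigma)\lesssim (1-t)^{\min(\sigma,1)}$, and since $1-t|\xi|\ge 1-t$ the integrand is $\lesssim (1-t)^{\min(\sigma,1)-1}$ with $\min(\sigma,1)-1>-1$, hence integrable at $t=1$ uniformly in $\xi$; the contributions near $t=0$ (where $\int_0 t^{\frac{2\beta}{p}-1}\,dt<\infty$) and away from the endpoints are trivially bounded.

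The delicate point --- and the main obstacle --- is $I_\Upsilon$. The crude bound $(1-t|\xi|^2)^\sigma-(1-t)^\sigma\lesssim (1-|\xi|^2)^{\min(\sigma,1)}$ is lossy when $\sigma>1$, because the companion factor $(1-t|\xi|)^{-2\beta/q}$ contributes exactly the critical power $(1-|\xi|)^{-\sigma}$. I would instead split the integral at $t=|\xi|^2$ and retain the $t$--dependence of the difference. On $\{t\ge|\xi|^2\}$ one has $(1-t|\xi|^2)^\sigma-(1-t)^\sigma\le(1-t|\xi|^2)^\sigma\lesssim(1-|\xi|^2)^\sigma$ and $1-t|\xi|\asymp 1-|\xi|^2$ there, so that piece is $\lesssim (1-|\xi|^2)^{\sigma+\beta-\frac{2\beta}{q}}=(1-|\xi|^2)^{0}$ by the identity $\sigma=\tfrac{2\beta}{q}-\beta$; on $\{t\le|\xi|^2\}$ one writes $1-t|\xi|^2=(1-t)+t(1-|\xi|^2)$ to get $(1-t|\xi|^2)^\sigma-(1-t)^\sigma\lesssim (1-|\xi|^2)(1-t)^{\sigma-1}$, so that piece is $\lesssim (1-|\xi|^2)\int_0^{|\xi|^2}(1-t)^{\beta+\sigma-2}(1-t|\xi|)^{-\frac{2\beta}{q}}\,dt$, and since $1-t|\xi|\ge 1-t$ and $\beta+\sigma-2-\tfrac{2\beta}{q}=-2$ the last integral is $O\bigl((1-|\xi|^2)^{-1}\bigr)$, giving $O(1)$ after the prefactor. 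Carrying out this bookkeeping so that every exponent closes uniformly up to the boundary is where care is genuinely needed, and --- in the spirit of the remark in the introduction --- is also the natural place where one may be forced to distinguish the cases $\alpha\lessgtr (2-p)/(1-p)$ (the sign of $\beta+\sigma-2$, equivalently of $\tfrac{2\beta}{q}-2$, controlling the behaviour of the inner integral on $\{t\le|\xi|^2\}$). Once $I_\Psi$ and $I_\Upsilon$ are shown bounded, Lemma~\ref{lem:finalstep} follows.
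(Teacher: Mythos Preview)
Your proposal is correct and takes a genuinely different route from the paper's own proof.

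The paper works at the level of Taylor coefficients: it first proves a Hausdorff--Young inequality for $A^p_\alpha$ (Lemma~\ref{lem:HLforBergman}), then bounds $\|\Psi_\xi\|_{p,\alpha}$ via the asymptotics $\epsilon_k=O((k+1)^{2\beta-2\beta/q-2})$, and bounds $\|\Upsilon_\xi\|_{p,\alpha}$ by estimating $|g_k(|\xi|^2)-g_k(1^-)|$ pointwise in $k$. For the latter it needs two separate arguments --- the L'H\^opital monotone rule when $\beta<q/2$, and a Taylor expansion to order $m=\lceil 2\beta/q-\beta\rceil$ when $\beta>q/2$ --- which is exactly the case split $\alpha\lessgtr(2-p)/(1-p)$ flagged in the paper. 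Your approach bypasses all of this: the Euler-integral representations recast $\Psi_\xi$ and $\Upsilon_\xi$ as continuous superpositions of the kernels $(1-t\bar\xi z)^{-2\beta}$, and Minkowski's integral inequality together with the explicit kernel norm reduces everything to the scalar integrals $I_\Psi,\,I_\Upsilon$. Your splitting of $I_\Upsilon$ at $t=|\xi|^2$ works uniformly: on $\{t\le|\xi|^2\}$ one has $1-t\le 1-t|\xi|^2\le 2(1-t)$, so the mean-value estimate $(1-t|\xi|^2)^\sigma-(1-t)^\sigma\lesssim(1-|\xi|^2)(1-t)^{\sigma-1}$ holds for \emph{all} $\sigma>0$, and after using $1-t|\xi|\ge 1-t$ the remaining integral is $\int_0^{|\xi|^2}t^{\beta-1}(1-t)^{-2}\,dt=O\bigl((1-|\xi|^2)^{-1}\bigr)$ regardless of the sign of $2\beta/q-2$. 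So despite your hedging at the end, your argument does \emph{not} need the case distinction; in fact it answers the question the paper raises (``It would be of some interest to find a unified approach that directly covers all cases''). The paper's method, in exchange, yields sharper coefficient-level information and makes the Hausdorff--Young mechanism explicit, which may be of independent interest.
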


To prove this lemma, we shall use the following simple fact (See \cite[Lemma 2.2]{AVV} or \cite[Theorem 2]{AVV2}).

\begin{lemma}[L'H\^{o}pital Monotone Rule]\label{lem:monotonerule}
Let $-\infty<a<b<\infty$, and let $\varphi, \psi: [a, b]\to \mathbb{R}$ be continuous functions that are differentiable
on $(a,b)$, with $\varphi(a)=\psi(a)=0$ or  $\varphi(b)=\psi(b)=0$. Assume that $\psi^{\prime}(x)\neq 0$ for each $x$ in $(a, b)$.
If $\varphi^{\prime}/\psi^{\prime}$ is increasing (decreasing) on $(a, b)$, then so is  $\varphi/\psi$.
\end{lemma}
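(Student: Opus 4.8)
The plan is to reduce the monotonicity of $\varphi/\psi$ to that of $\varphi'/\psi'$ by means of Cauchy's Mean Value Theorem. First I would record two preliminary facts. Since $\psi'$ never vanishes on $(a,b)$, Rolle's theorem forbids $\psi$ from taking the same value twice; being continuous and injective on an interval, $\psi$ is therefore strictly monotone, and consequently $\psi'$ keeps a constant sign throughout $(a,b)$ that agrees with the direction of monotonicity. In particular, under the normalization $\varphi(a)=\psi(a)=0$ one has $\psi(x)\neq 0$ for all $x\in(a,b]$, so that both $\varphi/\psi$ and its derivative are well defined there; the case $\varphi(b)=\psi(b)=0$ is symmetric with $(a,b]$ replaced by $[a,b)$.

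The crux is the pointwise identity
\begin{equation*}
\left(\frac{\varphi}{\psi}\right)'(x)=\frac{\psi'(x)}{\psi(x)}\left(\frac{\varphi'(x)}{\psi'(x)}-\frac{\varphi(x)}{\psi(x)}\right),
\end{equation*}
obtained by factoring $\psi'/\psi$ out of the quotient rule. To control the bracket I would apply Cauchy's Mean Value Theorem. In the case $\varphi(a)=\psi(a)=0$, applying it on $[a,x]$ produces a point $c\in(a,x)$ with $\varphi(x)/\psi(x)=\varphi'(c)/\psi'(c)$; writing $R:=\varphi'/\psi'$, the identity becomes
\begin{equation*}
\left(\frac{\varphi}{\psi}\right)'(x)=\frac{\psi'(x)}{\psi(x)}\bigl(R(x)-R(c)\bigr),\qquad c<x.
\end{equation*}
In the case $\varphi(b)=\psi(b)=0$ the same theorem applied on $[x,b]$ yields instead a point $c\in(x,b)$, i.e. $c>x$, and the analogous identity.

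It then remains to track signs, which is where the two boundary normalizations must be reconciled. When $\varphi(a)=\psi(a)=0$ the strict monotonicity of $\psi$ away from the value $0$ forces $\psi'(x)/\psi(x)>0$, while $c<x$; when $\varphi(b)=\psi(b)=0$ one instead gets $\psi'(x)/\psi(x)<0$, while $c>x$. In both normalizations the sign of $\psi'(x)/\psi(x)$ coincides with that of $x-c$, so if $R=\varphi'/\psi'$ is increasing then $R(x)-R(c)$ has the same sign as $x-c$, and the product $\frac{\psi'(x)}{\psi(x)}\bigl(R(x)-R(c)\bigr)$ is therefore nonnegative throughout the interior. Hence $(\varphi/\psi)'\geq 0$ and $\varphi/\psi$ is increasing; the decreasing case follows by replacing $\varphi$ with $-\varphi$, or equivalently by reversing every inequality. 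I expect the only genuinely delicate point to be precisely this sign bookkeeping: the factor $\psi'/\psi$ switches sign between the two boundary conditions exactly in step with the change in the position of $c$ relative to $x$, and it is this synchrony that makes the conclusion independent of which endpoint carries the vanishing data.
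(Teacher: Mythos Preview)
The paper does not actually prove this lemma; it merely states it as a known fact and cites \cite[Lemma 2.2]{AVV} and \cite[Theorem 2]{AVV2}. Your argument is correct and is essentially the standard proof found in those references: rewrite $(\varphi/\psi)'$ via the quotient rule, invoke Cauchy's Mean Value Theorem to replace $\varphi(x)/\psi(x)$ by $\varphi'(c)/\psi'(c)$ for some intermediate $c$, and then track signs. Your handling of the sign bookkeeping---noting that $\psi'/\psi$ flips sign between the two endpoint normalizations in lockstep with the relative position of $c$ and $x$---is exactly the point that needs care, and you have it right.
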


Also, We shall require a result of Hausdorff-Young type for $A_{\alpha}^p$, which is most likely known to the experts. However, we have been unable to find a reference, so we include a proof, for completeness.

\begin{lemma}[Hausdorff-Young theorem for $A_{\alpha}^p$]\label{lem:HLforBergman}
Suppose that $2\leq p<\infty$, $\alpha>-1$, and $\{a_k\}_{k=0}^{\infty}$ a sequence of complex numbers such that
\begin{equation*}
\sum_{k=0}^{\infty} \left\{\frac {k!\Gamma(1+\alpha)}{\Gamma(k+2+\alpha)}\right\}^{q-1} |a_k|^q <\infty.
\end{equation*}
Then the function $\varphi(z)=\sum_{k=0}^{\infty} a_k z^k$ is in $A_{\alpha}^p$, and
\begin{equation*}
\|\varphi\|_{p,\alpha}^q ~\leq \sum_{k=0}^{\infty} \left\{\frac {k!\Gamma(1+\alpha)}{\Gamma(k+2+\alpha)}\right\}^{q-1}
|a_k|^q.
\end{equation*}
\end{lemma}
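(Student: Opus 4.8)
\emph{Proof strategy.} The plan is to recognize the asserted inequality as the statement that the \emph{synthesis map}
\[
S\colon \{a_k\}\longmapsto \varphi(z)=\sum_{k=0}^{\infty}a_kz^k
\]
sends the sequence space with norm $\left(\sum_k w_k^{q-1}|a_k|^q\right)^{1/q}$, where $w_k:=\|z^k\|_{2,\alpha}^2$, into $A_\alpha^p$ with operator norm at most one, and to obtain this by interpolating between the endpoint exponents $p=2$ and $p=\infty$. As a preliminary reduction, a polar-coordinate computation evaluates $w_k$ as a Beta integral, comparable to $\frac{k!\,\Gamma(1+\alpha)}{\Gamma(k+2+\alpha)}$, which by Stirling decays like $k^{-(1+\alpha)}$; hence the hypothesis forces $|a_k|=O(k^{(1+\alpha)/p})$, so the power series $\varphi$ converges on $\disk$ and is holomorphic there. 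Moreover it suffices to prove the inequality for finitely supported sequences: the general case then follows by applying it to the Taylor polynomials $\varphi_N=\sum_{k\le N}a_kz^k$ and letting $N\to\infty$, Fatou's lemma yielding both $\varphi\in A_\alpha^p$ and the full estimate.

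\emph{The two endpoints.} When $p=\infty$ (so $q=1$), the triangle inequality together with $|z|<1$ gives $\|\varphi\|_{\infty,\alpha}=\sup_{\disk}|\varphi|\le\sum_k|a_k|$, so $S\colon\ell^1\to L_\alpha^\infty$ is bounded with norm $\le 1$. When $p=2$ (so $q=2$), the monomials $z^k$ are mutually orthogonal in $L_\alpha^2$, so Parseval's identity reads $\|\varphi\|_{2,\alpha}^2=\sum_k w_k|a_k|^2$; thus $S$ maps the weighted space with norm $\left(\sum_k w_k|a_k|^2\right)^{1/2}$ isometrically into $L_\alpha^2$, in particular with norm $\le 1$. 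This covers the case $p=2$ of the lemma outright, since there the asserted inequality is exactly this Parseval identity (up to the explicit value of $w_k$).

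\emph{Interpolation, and the main obstacle.} I would then apply the Riesz--Thorin theorem in its change-of-measure (Stein--Weiss) form to these two endpoint estimates, with interpolation parameter $\theta=2/p\in(0,1]$. On the target side the underlying measure $dA_\alpha$ carries a trivial weight at both endpoints, so it interpolates to $L_\alpha^p$; on the source side, $\ell^1$ and the weighted $\ell^2$ above interpolate to $\ell^q$ with weight $w_k^{\theta q/2}=w_k^{q/p}=w_k^{q-1}$. This produces $\|\varphi\|_{p,\alpha}\le\left(\sum_k w_k^{q-1}|a_k|^q\right)^{1/q}$ for finitely supported $\{a_k\}$, which is the claimed bound after inserting the Beta-integral value of $w_k$, and the reduction above extends it to all admissible sequences. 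The one step requiring real care is this interpolation: one must invoke the weighted (change-of-measure) version of Riesz--Thorin rather than the textbook one, and track precisely how the $\ell^2$-weight transforms --- it becomes $w_k^{q-1}$ --- while noting that the $p=\infty$ endpoint poses no difficulty here, its weights being trivial on both sides. Everything else --- the Beta-integral evaluation, the Parseval identity, the a priori holomorphy of $\varphi$, and the Fatou passage from polynomials --- is routine.
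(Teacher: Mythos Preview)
Your proof is correct and follows essentially the same route as the paper: interpolate via Riesz--Thorin between the Parseval identity at $p=2$ and the trivial $\ell^1\to L^\infty$ bound at $p=\infty$. The only cosmetic difference is that the paper sidesteps the weighted (Stein--Weiss) form by setting $\lambda_k=\|z^k\|_{2,\alpha}^2$, substituting $b_k=\lambda_k a_k$, and equipping $\mathbb{N}$ with the measure $\mu(\{k\})=\lambda_k^{-1}$, so that both endpoint bounds become estimates for a single linear map from the \emph{unweighted} spaces $L^q(\mathbb{N},\mu)$ into $L^p(\disk,dA_\alpha)$ and the textbook Riesz--Thorin applies directly; unwinding $\|b\|_{L^q(\mu)}^q=\sum_k\lambda_k^{-1}|\lambda_k a_k|^q=\sum_k\lambda_k^{q-1}|a_k|^q$ recovers exactly your weight $w_k^{q-1}$.
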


\begin{proof}
For simplicity, we denote $\lambda_k:=\frac {k!\Gamma(1+\alpha)}{\Gamma(k+2+\alpha)}$.
Let $\mu$ be the discrete measure on the set $\mathbb{N}$ of nonnegative integers
which assigns the mass $\mu(k) = \lambda_k^{-1}$ to the integer $k = 0,1,2,\ldots$. Consider
the linear operator $T$ that maps the sequence $\mathbf{b}:=\{b_k\}_{k=0}^{\infty} = \{\lambda_k a_k\}_{k=0}^{\infty}$ to the
formal power series $\varphi(z) = \sum a_k z^k$. For $2 < p < \infty$, we want to show
that $T$ is bounded as an operator from $L^q(\mathbb{N},d\mu)$ to $L^p(\disk, dA_{\alpha})$, with norm
$\|T\|\leq 1$. But for $p = 2$ this follows from the relation
\[
\|\varphi\|_{2,\alpha}^2 = \sum_{k=0}^{\infty}\lambda_k |a_k|^2 = \sum_{k=0}^{\infty} \lambda_k^{-1} |b_k|^2=\|\mathbf{b}\|_{L^2(\mathbb{N},d\mu)}^2.
\]
For $p=\infty$, it is the trivial fact that
\[
\|\varphi\|_{\infty}\leq \sum_{k=0}^{\infty} |a_k| = \sum_{k=0}^{\infty} \lambda_k^{-1} |b_k|=\|\mathbf{b}\|_{L^1(\mathbb{N},d\mu)}.
\]
Thus we may invoke the Riesz-Thorin interpolation theorem (see \cite{Gra},
p.34, Theorem 1.3.4) to draw the conclusion that $\|T(\mathbf{b})\|_{p,\alpha} \leq \|\mathbf{b}\|_{L^q(\mathbb{N},d\mu)}$ for $2\leq p \leq \infty$.
\end{proof}

\begin{proof}[Proof of Lemma \ref{lem:finalstep}]

We start with the functions $\xi\mapsto \|\Psi_{\xi}\|_{p,\alpha}$.
By the asymptotic formula (\cite[p.47]{Erd})
\[
\frac {\Gamma(k+a)}{\Gamma(k+b)} ~\approx~ k^{a-b} \left\{ 1+\frac {1}{2k} (a-b)(a+b-1)+O(k^{-2})\right\},
\]
we see that
\begin{equation*}
\epsilon_k=O\left((k+1)^{2\beta-2\beta/q-2}\right).
\end{equation*}
In view of Lemma \ref{lem:HLforBergman}, this implies that
\begin{align*}
\|\Psi_{\xi}\|_{p,\alpha}^q ~\leq~& \left\{\frac{\Gamma(2\beta/p)\Gamma(2\beta/q)}{\Gamma^2 (\beta)}\right\}^q \sum_{k=0}^{\infty} \left\{\frac {k!\Gamma(2\beta-1)}{\Gamma(k+2\beta)}\right\}^{q-1}
\left|\epsilon_k |\xi|^k \right|^q \\
\lesssim~ & \sum_{k=0}^{\infty} (k+1)^{-(2\beta-1)(q-1)+2\beta q -2\beta -2q} |\xi|^{qk}\\
\lesssim~ & \sum_{k=0}^{\infty} (k+1)^{-q-1} < +\infty.
\end{align*}

As for the functions $\Upsilon_{\xi}$, we proceed as follows.
For $k\in \mathbb{Z}_{+}$, set
\[
g_k(x):=  \hyperg {\beta- 2\beta/q} {\beta +k } {2\beta+k} {x}.
\]
Note by \eqref{eqn:gauss} that
\[
g_k(1^-)=\frac {\Gamma(2\beta/q) \Gamma(2\beta+k)} {\Gamma(\beta) \Gamma(\beta + 2\beta/q +k)}.
\]
Thus we can rewrite \eqref{eqn:upsilon} as
\begin{equation}\label{eqn:newUps}
\Upsilon_{\xi}(z)=\sum_{k=0}^{\infty} \frac {\left(\beta \right)_k} {k!} \left[g_k(|\xi|^2)-g_k(1^-)\right] (z\bar{\xi})^k.
\end{equation}

Since we have assume $\alpha\neq (2-p)/(p-1)$, i.e., $\beta\neq q/2$,  the argument breaks down into two cases.

\subsubsection*{Case 1:  $1/2<\beta<q/2$}

We first show that
\begin{equation}\label{eqn:claim1}
|g_k(x)-g_k(1^{-})| \leq C_1(k,\beta) (1-x)^{2\beta/q}
\end{equation}
for all $x\in [0,1)$, where
\[
C_1(k,\beta):=\frac {(1-q/2) \Gamma(1-2\beta/q) } {\Gamma(1-2\beta/q+\beta)}
\frac {\Gamma(2\beta+k)} {\Gamma(\beta+k)}.
\]

By \eqref{eqn:diffhyperg}, we have
\begin{equation}\label{eqn:case1eq0}
g_k^{\prime}(x) = \frac {(\beta - 2\beta/q) \left(\beta +k\right) } {2\beta+k }
\hyperg {\beta - 2\beta/q + 1} {\beta +k+1 } {2\beta+k+1} {x}.
\end{equation}
Together with \eqref{eqn:euler}, this leads to
\begin{equation}\label{eqn:case1eq1}
\frac {g_k^{\prime}(x)} {(1-x)^{2\beta/q-1}}
~=~ \frac {(\beta - 2\beta/q) \left(\beta +k\right) } {2\beta+k }
\hyperg {\beta + 2\beta/q +k} {\beta} {2\beta+k+1} {x}.
\end{equation}
Note that the Taylor's coefficients of the last hypergeometric function are all positive. So the function
\[
x ~\mapsto~ \frac {g_k^{\prime}(x)} {(1-x)^{2\beta/q-1}}
\]
is increasing in the interval $[0,1)$, hence so is the function
\[
x ~\mapsto~ \frac {g_k(x)-g_k(1^{-})} {(1-x)^{2\beta/q}},
\]
by Lemma \ref{lem:monotonerule}. Therefore, for all $x\in [0,1)$,
\begin{align*}
\frac {g_k(x)-g_k(1^{-})} {(1-x)^{2\beta/q}} ~\leq~& \lim_{x\to 1^{-}} \frac {g_k(x)-g_k(1^{-})} {(1-x)^{2\beta/q}} \\
=~& \lim_{x\to 1^{-}} \frac {g_k^{\prime}(x)} {(-2\beta/q) (1-x)^{2\beta/q-1}}\\
=~& \frac {(1-q/2) \left(\beta +k\right) } {2\beta+k }
\frac {\Gamma(1+2\beta+k) \Gamma(1-2\beta/q)} {\Gamma(1+\beta+k) \Gamma(1-2\beta/q+\beta)},
\end{align*}
where the last equality follows from \eqref{eqn:case1eq1} and \eqref{eqn:gauss}. This yields \eqref{eqn:claim1}.

Now, we can apply Lemma \ref{lem:HLforBergman} to obtain
\begin{align*}
\|\Upsilon_{\xi}\|_{p,\alpha}^q ~\leq~& \sum_{k=0}^{\infty} \left\{\frac {k!\Gamma(2\beta-1)}{\Gamma(k+2\beta)}\right\}^{q-1} \left\{ \frac {\left(\beta \right)_k} {k!}
\left|g_k(|\xi|^2)-g_k(1^{-})\right| |\xi|^{k} \right\}^q\\
\leq~& (1-|\xi|^2)^{2\beta} \sum_{k=0}^{\infty} \left\{\frac {k!\Gamma(2\beta-1)}{\Gamma(k+2\beta)}\right\}^{q-1} \left\{ \frac {\left(\beta \right)_k} {k!}
C_1(k,\beta) \right\}^q |\xi|^{qk}\\
\lesssim~& (1-|\xi|^2)^{2\beta} \sum_{k=0}^{\infty} \frac {\Gamma(2\beta+k)}{\Gamma(2\beta) k!} |\xi|^{qk} ~\lesssim~ 1.
\end{align*}

\subsubsection*{Case 2: $\beta>q/2$}
Let $m:=\lceil  2\beta/q -  \beta \rceil$, where the `ceiling' function $\lceil t\rceil$ denotes the smallest integer that is greater than or equal to $t$.
Fix $k\in \mathbb{Z}_{+}$. By \eqref{eqn:diffhyperg},
we have
\begin{align*}
g_k^{(j)}(x) =& \frac {d^j}{dx^j} \hyperg {\beta - 2\beta/q}
{\beta +k } {2\beta+k} {x}\\
=&\frac {\left(\beta- 2\beta/q\right)_j \left(\beta +k\right)_j } {(2\beta+k)_j }
\hyperg {\beta - 2\beta/q+j} {\beta +k+j } {2\beta+k+j} {x}
\end{align*}
for $j=1,2,\ldots,m$.

The definition of $m$ implies that $m < 2\beta/q$, which guarantees that $g_k^{(j)}(1^{-})$, $j=1,\ldots,m$,  exist and are finite.
Moreover, by \eqref{eqn:gauss} and Stirling's formula, it is easy to check that
\begin{align}\label{eqn:orderofgj}
\left|g_k^{(j)}(1^{-})\right| ~=~& \left|(\beta-2\beta/q)_j\right| \frac { \Gamma(2\beta/q-j)} {\Gamma(\beta)}
\frac {\Gamma(\beta+j+k) \Gamma(2\beta+k)} {\Gamma(\beta+k)\Gamma(\beta+2\beta/q+k)} \\
\approx~& (k+1)^{\beta-2\beta/q+j} \notag
\end{align}
for $j=1,2,\ldots,m$. Also, note that the function
\[
x~\mapsto~ \hyperg {\beta - 2\beta/q +m} {\beta +k+m } {2\beta+k+m} {x}
\]
is increasing in the interval $[0, 1)$, since its Taylor coefficients are all positive.

It follows that
\begin{equation}\label{eqn:mthder}
\sup_{x\in [0,1)} \left|g_k^{(m)}(x)\right| = \left|g_k^{(m)}(1^{-})\right|.
\end{equation}
Hence, by Taylor's formula and \eqref{eqn:mthder}, \eqref{eqn:orderofgj}, we find that
\begin{align*}
|g_k(x)-g_k(1^{-})| ~\leq~& \sum_{j=1}^m  \frac {\left|g_k^{(j)}(1^{-})\right|} {j!} (1-x)^j\\
~\lesssim~& \sum_{j=1}^m  (k+1)^{\beta - 2\beta/q +j} (1-x)^j
\end{align*}
for all $x\in [0,1)$.
Again, we apply Lemma \ref{lem:HLforBergman} to obtain
\begin{align*}
\|\Upsilon_{\xi}\|_{p,\alpha}^q ~\leq~& \sum_{k=0}^{\infty}  \left\{\frac {k!\Gamma(2\beta-1)}{\Gamma(k+2\beta)}\right\}^{q-1}
\left\{ \frac {\left( \beta \right)_k} {k!} \left|g_k(|\xi|^2)-g_k(1^{-})\right| |\xi|^{qk} \right\}^q\\
\lesssim~& \sum_{j=1}^{m} (1-|\xi|^2)^{jq} \sum_{k=0}^{\infty} (k+1)^{jq-1} |\xi|^{2k}  ~\lesssim~ 1.
\end{align*}
This completes the proof.

\end{proof}

\section{Further remarks on the upper bound}

\noindent Recall that for the Riesz projection $P_+$ Hollenbeck and Verbitsky \cite{HV} managed to prove the optimal upper bound. Their argument is largely based on an elementary inequality
\begin{equation}\label{HolVer}
\max(|w|^p,|z|^p)\leq a_p |w+\overline{z}|^p-b_p\mathrm{Re}[(wz)^{p/2}],
\end{equation}
which holds for $p \in (1,2)$ for $a_p=\csc(\pi/p)^p$ and $b_p$ chosen appropriately.

Note that, if $f$ is a trigonometric polynomial, then setting $w=P_+ f$ and $z=\overline{(1-P_+)f}$ both $w$ and $z$ are analytic. In particular, then $\mathrm{Re}[(wz)^{p/2}]$ is subharmonic -- and this is the key point of the argument. Unfortunately, for the operators $P_\alpha$, the function $(1-P_\alpha)f$ is, in general, too irregular for an analogue of this argument to be applied in a straightforward manner. Proving (or disproving) the sharpness of our conjecture
\begin{equation}
 \|P_{\alpha}\|_{p,\alpha}=\frac{\Gamma(\frac {2+\alpha}{p})\Gamma(\frac {2+\alpha}{q})}{\Gamma^2 (\frac {2+\alpha}{2})},
\end{equation}
whether it is via an elementary inequality like \eqref{HolVer}, or by some other method, remains a problem for the future.

\subsubsection*{Acknowledgement}
This paper was written while the author was visiting the Department of
Mathematics and Statistics, University of Helsinki. He wishes to express his
gratitude for the warm hospitality he received there, especially from Professors 
Mats Gyllenberg, Tuomas Hyt¡§ onen, Pertti Mattila, Jari Taskinen and
Hans-Olav Tylli.



\end{document}